\providecommand{\keywords}[1]
{
  \small	
  \textbf{\textit{Keywords -- }} #1
}
\Crefname{hp}{Assumption}{Assumptions}
\newtheorem{theorem}{Theorem}
\newtheorem{lemma}{Lemma}
\newtheorem{remark}{Remark}
\newtheorem{hp}{Assumption}
\newenvironment{proof}{%
\noindent{\bf Proof.\hskip.5em}\ignorespaces}{%
	}
\pgfplotsset{compat=1.18}
\DeclareSIUnit\mmhg{mmHg}
\newcommand{\bdout}{{\rm w}}
\newcommand{\PP}{{\rm el}}
\newcommand{\FF}{{\rm f}}
\renewcommand{\vec}[1]{{\bm{#1}}}
\newcommand{\jj}{{\rm j}}
\newcommand{\kk}{{\rm k}}
\newcommand{\EE}{{\rm E}}
\newcommand{\II}{{\rm I}}
\newcommand{\DD}{{\rm D}}
\newcommand{\NN}{{\rm N}}
\newcommand{\normcoerc}[1]{%
  |\mkern-1.5mu|
   #1
  |\mkern-1.5mu|
}
\newcommand{\seminormcoerc}[1]{%
  |
   #1
  |
}
\newcommand{\normDGd}[1]{\normcoerc{#1}_{{\rm DG},{\rm D}}}
\newcommand{\normDGj}[1]{\normcoerc{#1}_{{\rm DG},{\rm P}_\jj}}
\newcommand{\normDGu}[1]{\normcoerc{#1}_{{\rm DG},{\rm U}}}
\newcommand{\normDGp}[1]{\normcoerc{#1}_{{\rm DG},{\rm P}_\FF}}
\newcommand{\normENporoel}[1]{\normcoerc{#1}_{\PP,t}}
\newcommand{\normENfluid}[1]{\normcoerc{#1}_{\FF,t}}
\newcommand{\normENtau}[1]{\seminormcoerc{#1}_{\vec{\tau},t}}
\newcommand{\normEN}[1]{\normcoerc{#1}_{{\rm EN},t}}
\newcommand{\normENzero}[1]{\normcoerc{#1}_{{\rm EN},0}}
\newcommand{\normcont}[1]{%
  |\mkern-1.5mu|\mkern-1.5mu|
   #1
  |\mkern-1.5mu|\mkern-1.5mu|
}
\newcommand{\normcontD}[1]{\normcont{#1}_{{\rm D}}}
\newcommand{\normcontJ}[1]{\normcont{#1}_{{\rm P}_\jj}}
\newcommand{\normcontU}[1]{\normcont{#1}_{{\rm U}}}
\newcommand{\normcontP}[1]{\normcont{#1}_{{\rm P}_\FF}}
\newcommand{\Div}{{\rm div}}
\newcommand{\averagel}{\{\!\!\{}
\newcommand{\averager}{\}\!\!\}}
\newcommand{\jumpl}{[\![}
\newcommand{\jumpr}{]\!]}
\newcommand{\average}[1]{\averagel#1\averager}
\newcommand{\jump}[1]{\jumpl#1\jumpr}
\newcommand{\spaceW}{\vec{W}^{\rm DG}_h}
\newcommand{\spaceV}{\vec{V}^{\rm DG}_h}
\newcommand{\spaceQ}{Q^{\rm DG}_h}
\newcommand{\spaceQj}{Q^{\rm DG}_{J,h}}
\newlength{\mywidth}
\title{Discontinuous Galerkin method for a three-dimensional coupled fluid-poroelastic model with applications to brain fluid mechanics}
\author{Ivan Fumagalli}
\affil{\small MOX, Department of Mathematics, Politecnico di Milano, piazza Leonardo da Vinci 32, Milan, 20133, Italy}
\date{}
\begin{document}

\maketitle

\begin{abstract}
The modeling of the interaction between a poroelastic medium and a fluid in a hollow cavity is crucial for understanding, e.g., the multiphysics flow of blood and Cerebrospinal Fluid (CSF) in the brain, the supply of blood by the coronary arteries in heart perfusion, or the interaction between groundwater and rivers or lakes.
In particular, the cerebral tissue's elasticity and its perfusion by blood and interstitial CSF can be described by Multi-compartment Poroelasticity (MPE), while CSF flow in the brain ventricles can be modeled by the (Navier-)Stokes equations, the overall system resulting in a coupled MPE-(Navier-)Stokes system.
The aim of this paper is three-fold.
First, we aim to extend a recently presented discontinuous Galerkin method on polytopal grids (PolyDG) to incorporate three-dimensional geometries and physiological interface conditions.
Regarding the latter, we consider here the Beavers-Joseph-Saffman (BJS) conditions at the interface:
these conditions are essential to model the friction between the fluid and the porous medium.
Second, we quantitatively analyze the computational efficiency of the proposed method on a domain with small geometrical features, thus demonstrating the advantages of employing polyhedral meshes.
Finally, by a comparative numerical investigation, we assess the fluid-dynamics effects of the BJS conditions and of employing either Stokes or Navier-Stokes equations to model the CSF flow.
The semidiscrete numerical scheme for the coupled problem is proved to be stable and optimally convergent.
Temporal discretization is obtained using Newmark's $\beta$-method for the elastic wave equation and the $\theta$-method for the remaining equations of the model.
The theoretical error estimates are verified by numerical simulations on a test case with a manufactured solution,
and a numerical investigation is carried out on a three-dimensional geometry to assess the effects of interface conditions and fluid inertia on the system.
\end{abstract}

\keywords{%
Navier-Stokes equations, Multiple-network Poroelasticity Theory, Beavers-Joseph-Saffman interface conditions, Polyhedral mesh, Cerebrospinal fluid}


\section{Introduction}\label{sec:intro}

Many mathematical models of interest in the applications entail the coupling between a poroelastic medium and a fluid flowing in a hollow region outside of the matrix pores: it is the case, e.g., of the interaction between groundwater and surface waterbodies
\cite{layton2002coupling,discacciati2002mathematical}, 
between the blood-perfused cardiac tissue and the coronary flow \cite{michler2013computationally,digregorio2021computational}, or between the interstitial Cerebrospinal Fluid (CSF) in the brain tissue and its flow in the hollow cerebral ventricles
\cite{baledent2001cerebrospinal,causemann2022human}.
The latter is particularly relevant in the modeling of the waste clearance function played by CSF in the development of neurodegenerative diseases such as Alzheimer's \cite{glymphatic2,brennan2023role}.
Moreover, the CSF flow is strongly interconnected with the pulsatility of blood in the cerebral vasculature and capillaries
\cite{baledent2001cerebrospinal,bacyinski2017paravascular}, thus requiring for a Multiple-network Poroelasticity (MPE) model to account for the interstitial CSF and the blood flowing at different spatial scales in the porous tissue
\cite{guo2018subject,corti2022numerical}.

From the standpoint of finite element literature, interest has been paid to fluid-dynamics and poromechanics problems with the development and analysis of several methods, especially in the Discontinuous Galerkin (DG) class, including interior-penalty DG \cite{sun2005symmetric,lipnikov2014discontinuous,li2015high,AMVZ22}, staggered DG 
\cite{kim2013staggered,cheung2015staggered,zhao2021staggered},
and hybrid DG \cite{boffi2016nonconforming,anderson2018arbitrary,botti2021hybrid}.
The coupling of the fluid and poroelastic systems yields a complex multi-physics problem, investigated in the numerical literature in the case of the Biot-Stokes model
\cite{badia2009coupling,zunino2018biot,wen2020strongly,mardal2021accurate},
also in the regime of large deformations \cite{dereims20153d}, or for multilayered porous media coupled with Newtonian fluid flows \cite{bociu2021multilayered,bukavc2015multilayered}.
However, almost no numerical analysis of the discretization of fluid-poromechanics equations with multiple porous compartments can be found in the literature, despite their relatively wide use in the applications
\cite{digregorio2021computational,barnafi2022multiscale,chou2016fully}:
the MPE system - without a coupled fluid problem - has been analyzed in \cite{lee2019mixed,eliseussen2023posteriori,corti2022numerical}, but the numerical analysis of the coupled MPE-Stokes problem can be found only in \cite{fumagalli2024polytopal}.
In the case of applications to brain function, the main challenges are the high complexity of the domain (encompassing intricate folds and tortuous channels) and the paramount role of stress and flow exchanges at the interface between the two physical domains.
Therefore, a particularly suitable choice is the use of the Polytopal Discontinuous Galerkin (PolyDG) method, for three reasons:
\begin{itemize}
\item by supporting general mesh element shapes, it exhibits strong geometrical flexibility and it allows local refinement and hanging nodes;
\item high-order polynomials can be naturally employed in the discretization, thus guaranteeing low dispersion and dissipation errors, which is particularly relevant at the physical interface;
\item interface conditions can be naturally incorporated in the formulation, thanks to element-wise integration by parts.
\end{itemize}

{The PolyDG method inherits many characteristics from \lq\lq classical'' DG methods in terms of parallel implementation on high-performance computing architectures.
Yet, the use of a polyhedral mesh makes it highly beneficial in dealing with geometrically detailed domains in a computationally efficient way.
In this regard, cost-effective agglomeration strategies have been developed and employed to generate polyhedral meshes without hindering the abovementioned advantage \cite{antonietti2024agglomeration,feder2024r3mg,lymph}.}

In this framework, the present work has {four} aims:
\begin{itemize}
    \item expanding and verifying the PolyDG method proposed in \cite{fumagalli2024polytopal} in three-dimensional geometries;
    \item {assessing the computational advantage of employing a polytopal mesh instead of a classical hexahedral one, in the case of a 3D domain with small geometrical details;}
    \item extending the numerical method to include the physically-motivated Beavers-Joseph-Saffman (BJS) interface conditions and analyzing the stability and convergence of the resulting scheme. The BJS conditions play an essential role in practical applications, like in brain fluid mechanics or groundwater flows, because they take into account the friction and relative tangential velocity between the porous medium and the fluid in contact with it
    \cite{saffman1971boundary,causemann2022human};
    \item assessing the fluid-dynamics effects of modeling the CSF by either Stokes or Navier-Stokes equations, in the aforementioned multi-physics scenario and in the typical regime of brain waste clearance. Both fluid models have been used in the literature, but a direct quantitative comparison between the two is missing.
\end{itemize}

The paper is organized as follows.
\cref{sec:model} describes the multiphysics mathematical model and its weak formulation, particularly discussing the imposition of interface conditions.
The PolyDG space discretization method is introduced in \cref{sec:polydg} and its stability and convergence analysis is presented in \cref{sec:apriori}.
Time discretization is introduced in \cref{sec:fullydiscrete}, while \cref{sec:conv} reports some verification tests.
{A quantitative comparison between the polytopal-mesh-based discretization and a hexahedral one is reported in \cref{sec:polyvsstd}, in terms of the computational cost of solving a differential problem over a domain with small inclusions.}
Numerical results in {an idealized geometry retaining the same topology of the brain and ventricles system, and set in a physiological physical regime,} are discussed in \cref{sec:stokes}, while \cref{sec:navier} reports an assessment of the impact of the BJS condition and the advection term in Navier-Stokes equations on {the fluid-dynamics of the system}.

\section{The mathematical model}\label{sec:model}

\begin{figure}
    \centering
    \includegraphics[width=0.45\textwidth]{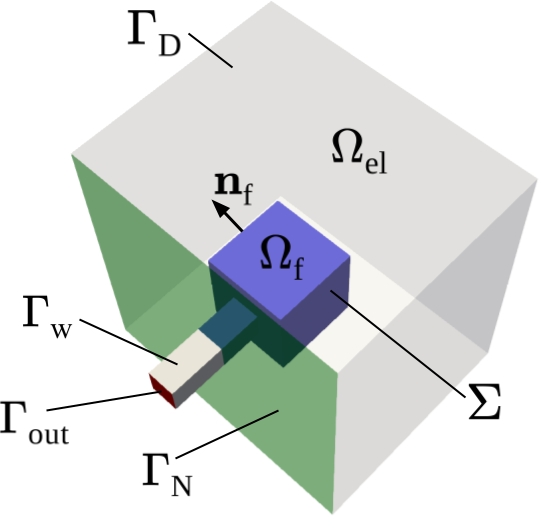}
    \caption{Computational domain: poroelastic region $\Omega_\PP$ and fluid region $\Omega_\FF$, interface $\Sigma$ between them (blue), and external boundaries $\Gamma_\text{out}$ (red), $\Gamma_\text{w}$ (grey), $\Gamma_\text{D}$ (light grey) and $\Gamma_\text{N}$ (green).}
    \label{fig:domain}
\end{figure}

We consider the coupling of a Multiple-Network Poroelasticity system and Navier-Stokes equations in a domain such as the one displayed in \cref{fig:domain}.
The overall domain $\Omega\in\mathbb R^d$ ($d=2,3$) is split into a poroelastic region $\Omega_\PP$ and a fluid region $\Omega_\FF$, separated by the interface $\Sigma=\overline{\Omega}_\PP\cap\overline{\Omega}_\FF$, which is assumed to be a piecewise smooth $(d-1)-$manifold.
The poroelastic region is filled by an elastic solid body and $N_J$ fluid components, indicated by the elements of an index set $J$.
Within this set, let us assume that only one component directly exchanges mass through $\Sigma$: we denote this component by $\EE\in J$.
The displacement of the solid matrix is denoted by $\vec d:\Omega_\PP\to\mathbb R^d$, its corresponding stress tensor is $\sigma_\PP(\vec{d}) = 2\mu_\PP\varepsilon(\vec{d})+\lambda(\nabla\cdot\vec{d})I$, with $\varepsilon(\vec{d})=\frac{1}{2}(\nabla\vec{d}+\nabla\vec{d}^T)$, and each of the fluid components is characterized by a pressure $p_\jj:\Omega_\PP\to\mathbb R, \jj\in J$.
In the fluid domain $\Omega_\FF$, velocity, pressure, and viscous stress are denoted by $\vec{u}$, $p$, and $\tau_\FF(\vec{u})=2\mu_\FF\varepsilon(\vec{u})$, respectively.
The physical parameters in the above definitions and in the following are assumed to be constant.

At the fluid-poroelastic interface $\Sigma$, the following conditions are imposed:
\begin{subnumcases}{\label{eq:interf}}
    \sigma_\PP(\vec{d})\vec{n}_\PP - \sum_{\kk\in J} \alpha_\kk p_\kk\vec{n}_\PP + \tau_\FF(\vec{u})\vec{n}_\FF - p \vec{n}_\FF = \vec{0}, 
    &$ \quad\text{on } \Sigma\times(0,T],$
    \label{eq:BCtotalstress}\\
    p_\EE = p - \tau_\FF(\vec{u})\vec{n}_\FF\cdot\vec{n}_\FF, 
    &$ \quad\text{on } \Sigma\times(0,T],$\label{eq:BCnormalstress}\\
    \frac{1}{\mu_\jj}k_\jj\nabla p_\jj\cdot\vec{n}_\PP = 0,
    &$ \quad\text{on } \Sigma\times(0,T],\quad\forall\jj\in J\setminus\{\EE\},$\label{eq:BCpnonE}\\
    \vec{u}\cdot\vec{n}_\FF + \left(\partial_t\vec{d}-\frac{1}{\mu_\EE}k_\EE\nabla p_\EE\right)\cdot\vec{n}_\PP = 0,
    &$ \quad\text{on } \Sigma\times(0,T],$\label{eq:BCnormalflux}\\
    \left(\tau_\FF(\vec{u})\vec{n}_\FF - p \vec{n}_\FF\right)_{\vec{\tau}} = -\frac{\gamma\mu_\FF}{\sqrt{k_\EE}}(\vec{u}-\partial_t\vec{d})_{\vec{\tau}}
    &$ \quad\text{on } \Sigma\times(0,T],$\label{eq:BCtgstress}
\end{subnumcases}
where $(\vec{\mathtt{v}})_{\vec{\tau}}=\vec{\mathtt{v}}-(\vec{\mathtt{v}}\cdot\vec{n}_\FF)\vec{n}_\FF$ denotes the tangential component of a vector $\vec{\mathtt{v}}\in\mathbb R^d$ along $\Sigma$.
Total stress balance is expressed by condition \eqref{eq:BCtotalstress}, and the normal stress of the fluid at the pores is balanced only by the pressure of compartment $\EE$ (see \eqref{eq:BCnormalstress}).
{
Along the tangential direction, the shear stress is assumed to be proportional to the tangential velocity jump between the fluid and the poroelastic medium (see \eqref{eq:BCtgstress}): this is the Beavers-Joseph-Saffman (BJS) condition, which accounts for the effects of the fluid velocity boundary layer that is created at a fluid-porous medium interface, due to viscous shear stress \cite{beavers1967boundary}.
Notice that, although the condition is written in terms of the total stress in \eqref{eq:BCtgstress}, pressure does not actually play a role in the balance, since $(\vec{n}_\FF)_\vec{\tau}=0$.
The BJS condition has been adopted in the literature to model CSF perfusion interfaces \cite{drosdal2013effect,causemann2022human}, and it can significantly affect the velocity profile near the interface, depending on the value of the slip rate parameter $\gamma$.
To quantitatively assess these effects, in \cref{sec:stokes} we are going to compare the results obtained with the BJS condition against those obtained with the free-slip condition (corresponding to $\gamma=0$) employed, e.g., in \cite{fumagalli2024polytopal}.}

In terms of boundary conditions on $\partial\Omega=(\partial\Omega_\PP\cup\partial\Omega_\FF)\setminus\Sigma$, we consider
a portion $\Gamma_\text{out}\subset(\partial\Omega_\FF\setminus\Sigma)$ of the fluid domain boundary as an outlet and the remaining part $\Gamma_\bdout=\partial\Omega_\FF\setminus(\Sigma\cup\Gamma_\text{out})$ as a solid wall,
and also the poroelastic domain boundary $\partial\Omega_\PP\setminus\Sigma$ is partitioned into a Dirichlet and a Neumann boundary, denoted by $\Gamma_\text{D}$ and $\Gamma_\text{N}$, respectively
(see~\cref{fig:domain}).
Denoting by $T>0$ the final observation time,
the coupled fluid-poroelastic system reads as follows: 
\begin{subnumcases}{\label{eq:NSMPE}}
    \rho_\PP\partial_{tt}^2\vec{d} - \nabla\cdot\sigma_\PP(\vec{d}) + \sum_{\kk\in J}\alpha_\kk\nabla p_\kk = \vec{f}_\PP,
    &$ \qquad\text{in } \Omega_\PP\times(0,T],$ \label{eq:elasticity}\\
    \begin{split}
        c_\jj\partial_t p_\jj+\nabla\cdot\left(\alpha_\jj\partial_t\vec{d}-\frac{1}{\mu_\jj}k_\jj\nabla p_\jj\right) \\
        \qquad+ \sum_{\kk\in J}\beta_{\jj\kk}(p_\jj-p_\kk) + \beta_\jj^\text{e}p_\jj = g_\jj,
    \end{split}
    &$ \qquad\text{in } \Omega_\PP\times(0,T],\quad\forall\jj\in J,$\label{eq:pj}\\
    \rho_\FF\partial_t\vec{u}
    +\rho_\FF(\vec{u}\cdot\nabla)\vec{u}
    - \nabla\cdot\tau_\FF(\vec{u}) + \nabla p = \vec{f}_\FF,
    &$ \qquad\text{in } \Omega_\FF\times(0,T],$ \label{eq:fluidMom}\\
    \nabla\cdot\vec{u} = 0,
    &$ \qquad\text{in } \Omega_\FF\times(0,T],$ \label{eq:fluidCont},\\
    \Bigl(\vec{d}(0), \partial_t\vec{d}(0)\Bigr)=\Bigl(\vec{d}_0,\dot{\vec{d}}_0\Bigr), \quad p_\jj(0)=p_{\jj0},
    &$ \qquad\text{in } \Omega_\PP,\quad\forall \jj\in J,$ \label{eq:mpeinit}\\
    \vec{u}(0) = \vec{u}_0
    &$ \qquad\text{in } \Omega_\FF,$ \label{eq:fluidinit}\\
    \vec{d} = \vec{0},
    \quad
    p_\jj = 0,
    &$ \qquad\text{on } \Gamma_\text{D}\times(0,T],\quad\forall \jj\in J,$\\
    \sigma_\PP(\vec{d})\vec{n} - \sum_{\jj\in J}\alpha_\jj p_\jj\vec{n} = \vec{0},
    \quad
    \frac{1}{\mu_\jj}k_\jj\nabla p_\jj\cdot\vec{n}_\PP = 0,
    &$ \qquad\text{on } \Gamma_\text{N}\times(0,T],\quad\forall \jj\in J,$\\
    \vec{u} = \vec{0},
    &$ \qquad\text{on } \Gamma_\bdout\times(0,T],$\\
    (\tau_\FF(\vec{u}) - p I)\vec{n}_\FF = -\overline{p}^\text{out}\vec{n}_\FF,
    &$ \qquad\text{on } \Gamma_\text{out}\times(0,T],$\\
    \text{and interface conditions \eqref{eq:interf}},
    &$ \qquad\text{on } \Sigma\times(0,T],$
\end{subnumcases}
with suitable definition of the source terms $\vec{f}_\PP:\Omega_\PP\times(0,T]\to\mathbb R^d,g_\jj:\Omega_\PP\times(0,T]\to\mathbb R,\vec{f}_\FF:\Omega_\FF\times(0,T]\to\mathbb R^d$, of the boundary data $\overline{p}^\text{out}:\Gamma_\text{out}\times(0,T]\to\mathbb R$ representing the external normal stress at the outlet, and of the initial conditions $\vec{d}_0:\Omega_\PP\to\mathbb R^d, \dot{\vec{d}}_0:\Omega_\PP\to\mathbb R^d, \vec{u}_0:\Omega_\FF\to\mathbb R^d, p_{\jj 0}:\Omega_\PP\to\mathbb R, \jj\in J$.
Throughout the paper, the data are assumed to be sufficiently regular.

        \begin{remark}[Application to brain fluid-poromechanics]
            The mathematical system \eqref{eq:interf}-\eqref{eq:NSMPE} considered here can be used to model fluid-poromechanics interaction in the brain \cite{corti2022numerical,causemann2022human,fumagalli2024polytopal}, with the fluid domain corresponding to the brain ventricles filled with CSF and the cerebral tissue being the solid matrix perfused by blood and \emph{extracellular} CSF.
            In the fluid compartments $J=\{\text{A},\text{C},\text{V},\EE\}$, $\text{A},\text{C},\text{V}$ can represent the arterial, capillary, and venous blood networks, while the extracellular CSF ($\EE\in J$) is the only compartment exchanging mass with the three-dimensional CSF (cf.~\eqref{eq:BCpnonE}-\eqref{eq:BCnormalflux}), due to the blood-brain barrier.
            Their numerical experiments of that will be presented in \cref{sec:stokes,sec:navier} address this application.
        \end{remark}

The solution variables belong to the following functional spaces:
\[
\mathscr{D}=H^2(0,T; \vec{W}),
\ \ \mathscr{P}=H^1(0,T; [Q_J]^{N_J}),
\ \ \mathscr{V}=H^1(0,T; \vec{V}),\ \ \mathscr{Q}=L^2(0,T;Q),
\]
where the notation $L^2(0,T;H), H^1(0,T; H)$ denotes the time-dependent Bochner spaces associated to a Sobolev space $H$, and
\[\begin{gathered}
\vec{W} = \{\vec{w}\in[H^1(\Omega_\PP)]^d \colon \vec{w}=0 \text{ on }
\Gamma_\text{D}\},\qquad
\vec{V} = \{\vec{v}\in[H^1(\Omega_\FF)]^d\colon \vec{v}=0 \text{ on }\Gamma_\bdout\},\\
Q_J = \{q\in H^1(\Omega_\PP)\colon q=0 \text{ on }\Gamma_{\text{D}}\},\qquad
Q = L^2(\Omega_\FF),
\end{gathered}\]
where $H^1(\Omega)$ denotes the classical Sobolev space of order 1 over $L^2(\Omega)$.

The weak formulation of problem \eqref{eq:NSMPE} reads as follows:\\
Find $(\vec{d},\{p_\jj\}_{\jj\in J},\vec{u},p)\in \mathscr{D}\times\mathscr{P}\times\mathscr{V}\times\mathscr{Q}$ such that, for all $t\in(0,T]$,
\begin{equation}\label{eq:weak}\begin{aligned}
    &(\rho_\PP\partial_{tt}^2\vec{d},\vec{w})_{\Omega_\PP} + a_\PP(\vec{d},\vec{w}) + \sum_{\jj\in J} b_\jj(p_\jj,\vec{w}) - F_\PP(\vec{w}) \\
    &\qquad+ \sum_{\jj\in J}\Bigl[(c_\jj\partial_t p_\jj,q_\jj)_{\Omega_\PP} + a_\jj(p_\jj,q_\jj) + C_j(\{p_\kk\}_{\kk\in J},q_\jj) - b_\jj(q_\jj,\partial_t\vec{d}) - F_\jj(q_\jj) \Bigr]\\
    &\qquad+ (\rho_\FF\partial_t\vec{u},\vec{v})_{\Omega_\FF} + a_\FF(\vec{u},\vec{v})
    + N_\FF(\vec{u},\vec{u},\vec{v})
    + b_\FF(p,\vec{v})+b_\FF(q,\vec{u}) - F_\FF(\vec{v})\\
    &\qquad+ \mathfrak J(p_\EE,\vec{w},\vec{v}) - \mathfrak J(q_\EE,\partial_t\vec{d},\vec{u}) + \mathfrak{G}(\vec{u}-\partial_t\vec{d}, \vec{v}-\vec{w}) = 0\\
\end{aligned}\end{equation}
for all $(\vec{w},\{q_\jj\}_{\jj\in J},\vec{v},q) \in \mathscr{D}\times\mathscr{P}\times\mathscr{V}\times\mathscr{Q}$,
with $\vec{d}(0)=\vec{d}_0, \partial_t\vec{d}(0)=\dot{\vec{d}}_0, \vec{u}(0)=\vec{u}_0, p_\jj(0)=p_{\jj0}$ $\forall \jj\in J$.
In \eqref{eq:weak}, we denoted by $(\cdot,\cdot)_{\Omega}$ the $L^2$-product over $\Omega$
{while the definitions of the bilinear forms are reported in \cref{sec:forms}.}

We point out that, differently from the models studied in \cite{corti2022numerical,fumagalli2024polytopal}, here we consider the following additional terms: the trilinear form $N_\FF$ and the interface form $\mathfrak G$, discussed in the following remarks.

\begin{remark}[Skew-symmetry of the advection form $N_\FF$]\label{rem:adv}
    In the trilinear form 
    {\[
    N_\FF(\vec{u}',\vec{u},\vec{v}) = \left(\rho_\FF(\vec{u}'\cdot\nabla)\vec{u}+\frac{\rho_\FF}{2}(\nabla\cdot\vec{u}')\vec{u},\vec{v}\right)_{\Omega_\FF},
    \]}%
    {we include} the additional term $\left(\frac{\rho_\FF}{2}(\nabla\cdot\vec{u}'),\vec{v}\right)_{\Omega_\FF}$, classically employed for Navier-Stokes problems \cite{temam2001navier}.
    This term vanishes if $\vec{u}'$ is the fluid velocity $\vec{u}$ of \eqref{eq:NSMPE}, but it ensures that that $N_\FF$ is skew-symmetric w.r.t.~exchanging the second and third argument also if $\vec{u}'$ is such that $\nabla\cdot\vec{u}'\neq 0$, as it may occur after the discretization of the equations.
\end{remark}

\begin{remark}[Derivation of the interface forms $\mathfrak J$ and $\mathfrak G$]\label{rem:J}
The interface forms 
{\[
\begin{gathered}
\mathfrak J(p_\EE,\vec{w},\vec{v}) = \int_\Sigma p_\EE\left(\vec{w}\cdot\vec{n}_\PP+\vec{v}\cdot\vec{n}_\FF\right)d\Sigma,\qquad
\mathfrak G(\vec{z}_1, \vec{z}_2) = \int_\Sigma \frac{\gamma\mu_\FF}{\sqrt{k_\EE}}\left(\vec{z}_1\right)_\vec{\tau}\cdot\left(\vec{z}_2\right)_\vec{\tau}d\Sigma.
\end{gathered}\]}%
naturally arise during the derivation of the weak form of problem \eqref{eq:NSMPE}.
We test \eqref{eq:elasticity}-\eqref{eq:pj} against functions $\vec{w}\in\vec{W}$ and
$q_\jj\in Q_J$,
with $\jj\in J$, over $\Omega_\PP$, and \eqref{eq:fluidMom} against $\vec{v}\in \vec{V}$ over $\Omega_\FF$. Then, integrating by parts and summing all the contributions yield the following boundary terms on the interface:
\begin{equation}\label{eq:neumanntermsCONT}
    \int_\Sigma\left[
    (pI-\tau_\FF(\vec{u}))\colon \vec{v}\otimes\vec{n}_\FF
    + \left(\sum_{\kk\in J}\alpha_\kk p_\kk I-\sigma_\PP(\vec{d})\right)\colon \vec{w}\otimes\vec{n}_\PP
    - \sum_{\jj\in J}\frac{1}{\mu_\jj}k_\jj\nabla p_\jj\cdot q_\jj\vec{n}_\PP
    \right]d\Sigma.
\end{equation}
Using the interface conditions \eqref{eq:BCtotalstress},\eqref{eq:BCpnonE} and then \eqref{eq:BCnormalstress},\eqref{eq:BCnormalflux},\eqref{eq:BCtgstress}, we can rewrite \eqref{eq:neumanntermsCONT} as follows:
\begin{equation}
\begin{aligned}
\int_\Sigma&\left[
(pI-\tau_\FF(\vec{u}))\colon (\vec{v}\otimes\vec{n}_\FF + \vec{w}\otimes\vec{n}_\PP) - \frac{1}{\mu_\EE}k_\EE\nabla p_\EE\cdot q_\EE\vec{n}_\PP
\right]d\Sigma \\
&= \int_\Sigma\left[
p_\EE (\vec{v}\cdot\vec{n}_\FF + \vec{w}\cdot\vec{n}_\PP) + \frac{\gamma\mu_\FF}{\sqrt{k_\EE}}(\vec{u}-\partial_t\vec{d})_\vec{\tau} \cdot (\vec{v}-\vec{w})_\vec{\tau} - q_\EE(\vec{u}\cdot\vec{n}_\FF + \partial_t\vec{d}\cdot\vec{n}_\PP)
\right]d\Sigma \\
&=\mathfrak J(p_\EE,\vec{w},\vec{v}) + \mathfrak{G}(\vec{u}-\partial_t\vec{d}, \vec{v}-\vec{w}) - \mathfrak J(q_\EE,\partial_t\vec{d},\vec{u}),
\end{aligned}\end{equation}
where we also used that $\vec{a}\otimes\vec{b}\colon I=\vec{a}\cdot\vec{b}$ for any $\vec{a},\vec{b}\in \mathbb R^d$, and that $\vec{n}_\FF=-\vec{n}_\PP$ on $\Sigma$.
\end{remark}

\section{Polytopal discontinuous Galerkin semi-discrete formulation}\label{sec:polydg}
In this section, we introduce the space discretization of problem \eqref{eq:weak} by a discontinuous Finite Element method on polytopal grids.

Let $\mathscr T_{h,\PP},\mathscr T_{h,\FF}$ be polytopal meshes discretizing the domains $\Omega_\PP,\Omega_\FF$, respectively.
We define the \emph{faces} of an element $K\in \mathscr T_{h,\PP}\cup\mathscr T_{h,\FF}$ as the $(d-1)$-dimensional entities constituting the intersection of $\partial K$ with either the boundary of a neighboring element or the domain boundary $\partial\Omega$.
For $d=2$ all faces are straight line segments, while for $d=3$ they are generic polygons, in principle: we assume that each of these polygons can be further decomposed into triangles, and we define as \emph{face} each of these triangles.
We collect all faces in each physical domain into $\mathscr F_\PP$ and $\mathscr F_\FF$, and we partition these sets into internal faces $\mathscr F_\PP^\II,\mathscr F_\PP^\II$,
Dirichlet/Neumann faces $\mathscr F_\PP^\DD/\mathscr F_\PP^\NN\subset\partial\Omega_\PP\setminus\Sigma,\mathscr F_\FF^\DD/\mathscr F_\FF^\NN\subset\partial\Omega_\FF\setminus\Sigma$ (as portions of the poroelastic and the fluid domain boundaries, respectively),
and interface faces $\mathscr F^\Sigma\subset\Sigma$.
We assume that the polytopal grids $\mathscr T_{h,\PP},\mathscr T_{h,\FF}$ are geometrically conforming with $\Sigma$, but possibly not mesh-conforming.

Over each mesh $\mathscr T_{h,\star}, \star\in\{\PP,\FF\}$, we introduce the broken Sobolev spaces of order $s$, namely
$H^s(\mathscr T_{h,\star}) = \{q\in L^2(\Omega_\star) \colon q|_K\in H^s(K)\quad\forall K\in\mathscr T_{h,\star}\}$.
Moreover, we define the following piecewise polynomial spaces for a given integer $m\geq 1$:
\begin{gather*}
    X_h^\text{DG}(\mathscr T_{h,\star}) =\{\phi\in L^2(\Omega_\star)\colon\phi|_K\in\mathbb P^m(K)\quad\forall K\in\mathscr T_{h,\star}\}, \qquad \star\in\{\PP,\FF\}\\
    \spaceQj = X_h^\text{DG}(\mathscr T_{h,\PP}),
    \quad
    \spaceQ = X_h^\text{DG}(\mathscr T_{h,\FF}),
    \quad
    \spaceW = [X_h^\text{DG}(\mathscr T_{h,\PP})]^d,
    \quad
    \spaceV = [X_h^\text{DG}(\mathscr T_{h,\FF})]^d.
\end{gather*}

To introduce the PolyDG discretization of \eqref{eq:weak}, we define the symmetric outer product $\vec{v}\odot\vec{n} = \frac{1}{2}(\vec{v}\otimes\vec{n}+\vec{n}\otimes\vec{v})$ and, for regular enough scalar-, vector- and tensor-valued functions $q,\vec{v},\tau$,
we define the following average and jump operators.
\begin{itemize}
    \item On each internal face $F\in\mathscr F^\II=\mathscr F_\PP^\II\cup \mathscr F_\FF^\II$ we set:
    \begin{align*}
    \average{q} &= \frac{1}{2}(q^++q^-),
    &\average{\vec{v}} &= \frac{1}{2}(\vec{v}^++\vec{v}^-),
    &\average{\tau} &= \frac{1}{2}(\tau^++\tau^-),
        \\
    \jump{q} &= q^+\vec{n}^++q^-\vec{n}^-,
    &\jump{\vec{v}} &= \vec{v}^+\odot\vec{n}^++\vec{v}^-\odot\vec{n}^-,
    &\jump{\tau} &= \tau^+\vec{n}^++\tau^-\vec{n}^-.
    \end{align*}
    where $\vec{n}^+,\vec{n}^-$ are defined as in \cref{fig:FSigma} - left.
    \item On a Dirichlet face $F\in\mathscr F_\PP^\DD\cup\mathscr F_\FF^\DD$ we set:
    \begin{align*}
    \average{q} &= q,
    &\average{\vec{v}} &= \vec{v},
    &\average{\tau} &= \tau,
        \\
    \jump{q} &= q\vec{n},
    &\jump{\vec{v}} &= \vec{v}\odot\vec{n},
    &\jump{\tau} &= \tau\vec{n},
    \end{align*}
    where $\vec{n}$ is the unit normal vector pointing outward to the element $K$ to which the face $F$ belongs.
    \item On a face $F\in\mathscr F^\Sigma$ shared by two elements $K_\PP\in\mathscr T_{h,\PP}$ and $K_\FF\in\mathscr T_{h,\FF}$ we set:
    \begin{align*}
        \average{q} &= q|_{K_\PP},
        \quad&\average{\tau} &= \tau|_{K_\PP},\\
        \jump{\vec{w},\vec{v}} &= \vec{w}|_{K_\PP}\odot\vec{n}_\PP+\vec{v}|_{K_\FF}\odot\vec{n}_\FF,
        \quad&\jump{\vec{w},\vec{v}}_\vec{\tau} &= (\vec{v}|_{K_\FF})_\vec{\tau} - (\vec{w}|_{K_\PP})_\vec{\tau},
    \end{align*}
    where $\vec{n}_\PP,\vec{n}_\FF$ are defined as in \cref{fig:FSigma} - right.
    Notice that the definitions of the interface jump operators account for the different physics defined on each side of the interface.
\end{itemize}

\begin{figure}
    \centering
    \includegraphics[height=0.27\textwidth]{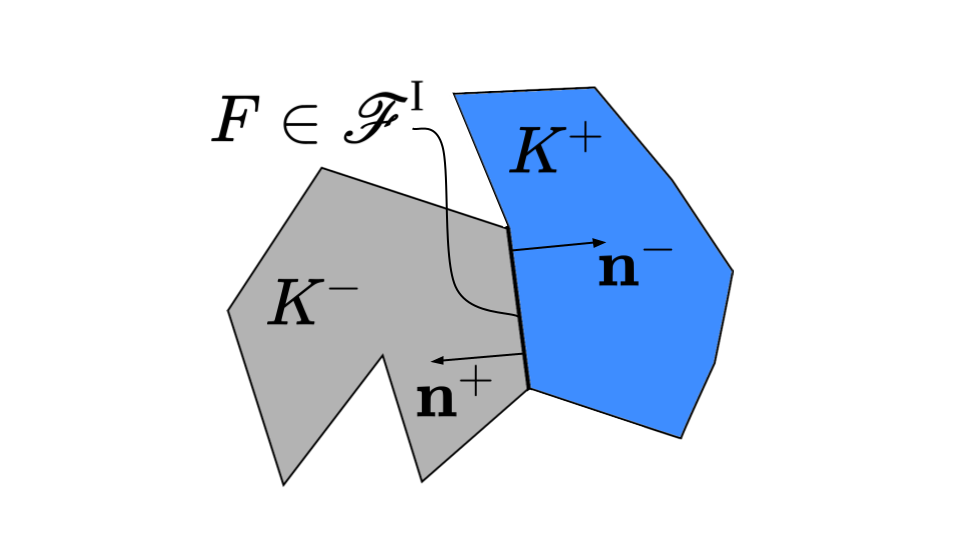}
    \includegraphics[height=0.27\textwidth]{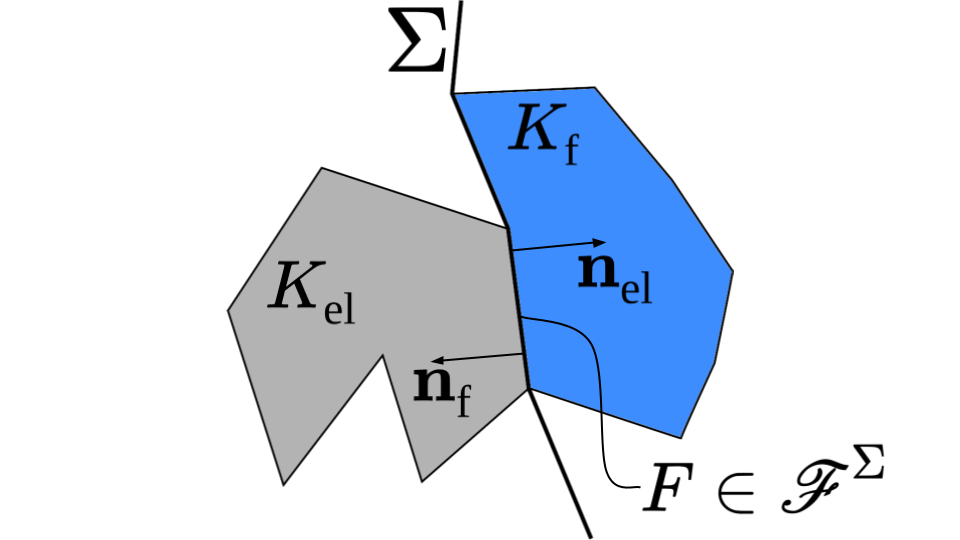}
    \caption{Polygonal elements sharing an internal face (left) or a face on the interface $\Sigma$ (right).}
    \label{fig:FSigma}
\end{figure}


Based on the spaces and the trace operators defined above, the semidiscrete formulation of problem \eqref{eq:weak} reads as follows:
\\
    For any $t\in(0,T]$, find $(\vec{d}_h,\{p_{\jj,h}\}_{\jj\in J},\vec{u}_h,p_h)\in \spaceW\times
    [\spaceQj]^{N_J}
    \times \spaceV \times \spaceQ$ such that
\begin{equation}\label{eq:DG}\begin{aligned}
    (\rho_\PP\partial_{tt}^2\vec{d}_h&,\vec{w}_h)_{\Omega_\PP} + \mathcal L_\PP(\vec{d}_h,\{p_{\kk,h}\}_{\kk\in J}; \vec{w}_h) - \mathcal F_\PP(\vec{w}_h) \\
    & + \sum_{\jj\in J} \left[ (c_\jj\partial_t p_{\jj,h}, q_{\jj,h})_{\Omega_\PP} + \mathcal L_\jj(\{p_{\kk,h}\}_{\kk\in J},\partial_t\vec{d}_h;q_{\jj,h}) - \mathcal F_\jj(q_{\jj,h}) \right] \\
    & + (\rho_\FF\partial_t\vec{u}_h,\vec{v}_h)_{\Omega_\FF}+\mathcal L_\FF(\vec{u}_h,p_h;\vec{v}_h,q_h) - \mathcal F_\FF(\vec{v}_h)
    \\
    & + \mathcal J(p_{\EE,h}, \vec{w}_h,\vec{v}_h) - \mathcal J (q_{\EE,h}, \partial_t\vec{d}_h,\vec{u}_h) + \mathcal G(\vec{u}_h-\partial_t\vec{d}_h, \vec{v}_h-\vec{w}_h)= 0 \\ &\forall\vec{w}_h\in\spaceW,\vec{v}_h\in\spaceV,q_h\in\spaceQ,q_{\jj,h}\in\spaceQj,
\end{aligned}\end{equation}
with initial conditions defined in terms of the projections $\vec{d}_h(0), \dot{\vec{d}}_h(0), \{p_{\jj,h}(0)\}_{\jj\in J}, \vec{u}_h(0)$ of the initial data introduced in \eqref{eq:NSMPE} onto the corresponding DG spaces.
The forms and functionals appearing in \eqref{eq:DG} are the PolyDG version of those defined in \eqref{eq:formsContPb} and their complete definitions can be found in \cref{sec:forms}.
Here we just report the definition of the interface terms, to show the role of the interface jump operators:
\begin{align*}
    \mathcal J(p_\EE, \vec{w},\vec{v}) &=
        \sum_{F\in\mathscr{F}^\Sigma}\int_F\left(
    \average{p_\EE I} \colon \jump{\vec{w},\vec{v}}
    \right)
    ,\\
    \mathcal G(\vec{v}_1-\vec{w}_1,\vec{v}_2-\vec{w}_2) &=
        \sum_{F\in\mathscr{F}^\Sigma}\int_F
    \frac{\gamma\mu_\FF}{\sqrt{k_\EE}}\jump{\vec{w}_1,\vec{v}_1}_\vec{\tau} \cdot \jump{\vec{w}_2,\vec{v}_2}_\vec{\tau}.
\end{align*}
Moreover, we point out that the definitions of $\mathcal L_\PP,\mathcal L_\FF, \mathcal L_\jj,\jj\in J,$ include stabilization terms with parameters that depend on the mesh element size (see \cref{sec:forms}), that will be useful in the theoretical analysis of \cref{sec:apriori}.
However, these stabilization terms are defined only on internal faces and do not contribute to the interface terms.

\section{Stability and error analysis of the semidiscrete Stokes-MPE problem}\label{sec:apriori}

In this section, we analyze the semidiscrete problem corresponding to a choice of the Stokes equations to model the CSF flow in $\Omega_\FF$, namely we neglect the nonlinear advection term in the form $\mathcal L_\FF$ of \eqref{eq:DG} (see form $\mathcal N_\FF$ in \eqref{eq:formsAllTogetherFF}).
The following analysis holds for a generic set $J$ made of $N_J\in\mathbb N_0$ fluid compartments of the poroelastic model.
For the sake of simplicity, we make the following assumptions.
\begin{hp}\label{hp}
\
\begin{itemize}
    \item all the physical parameters of the model (cf.~\cref{tab:modelparams}) are piecewise constant over the aforementioned decomposition;
    \item the polytopal mesh $\mathscr T_h=\mathscr T_{h,\PP}\cup\mathscr T_{h,\FF}$ fulfills the regularity assumptions of the PolyDG framework, namely that $\mathscr T_h$ is $h$-uniformly polytopic-regular, a local bounded variation property holds, and there exists a suitable shape-regular simplicial covering of $\mathscr T_h$ \cite{antonietti2016review, cangiani2014hp}.
\end{itemize}
\end{hp}
In all the inequalities appearing hereafter, the dependency on the model parameters and the finite element degree $m$ will be neglected: by $x\lesssim y$ we will indicate that $\exists C>0:x\leq C y$, with $C$ independent of the space discretization parameters.

Following \cite{fumagalli2024polytopal,corti2022numerical,AMVZ22}, we define the following broken norms:
\begin{subequations}\label{eq:brokennorms}\begin{align}
    \normDGd{\vec{d}}^2 &= \|{\mathbb C_\PP^{1/2}}[\varepsilon_h(\vec{d})]\|_{L^2(\mathscr T_{h,\PP})}^2 + \|\sqrt{\eta}\jump{\vec{d}}\|_{\mathscr F_{\PP,h}^{\II}\cup\mathscr F_{\PP,h}^{\DD}}^2
    &\forall\vec{d}\in \vec{H}^1({\mathscr T_{h,\PP}}),
    \\
    \normDGj{p}^2 &= \|\mu_\jj^{-1/2}k_\jj^{1/2}\nabla_h p\|_{L^2(\mathscr T_{h,\PP})}^2 + \|\sqrt{\zeta_\jj}\jump{p}\|^2_{\mathscr F_{\PP,h}^{\II}\cup\mathscr F_{\PP,h}^{\DD_\jj}}
    &\forall p \in H^1({\mathscr T_{h,\PP}}),
    \\
    \normDGu{\vec{u}}^2 &= \|\sqrt{2\mu}\,\varepsilon_h(\vec{u})\|_{L^2(\mathscr T_{h,\FF})}^2 + \|\sqrt{\gamma_\vec{v}}\jump{\vec{u}}\|^2_{\mathscr F_{\FF,h}^{\II}}
    &\forall\vec{u}\in \vec{H}^1({\mathscr T_{h,\FF}}),
    \\
    \normDGp{q}^2 &= \|q\|_{L^2(\Omega_\FF)}^2 + \|\sqrt{\gamma_p}\jump{q}\|^2_{\mathscr F_{\FF,h}^{\II}\cup\mathscr F_{\FF,h}^{\DD}}
    &\forall q\in H^1({\mathscr T_{h,\FF}}),
\end{align}\end{subequations}
and we introduce the following energy norms at time $t\in(0,T]$, based on those broken norms but also depending on the tangential velocity of the fluid and of the poroelastic structure along the interface $\Sigma$:
\begin{align}\label{eq:normcoerc}
    \normEN{(\vec{d},\{p_\jj\}_{\jj\in J},\vec{u},p)}
    &= \left[ \normENporoel{(\vec{d},\{p_\jj\}_{\jj\in J})}^2+ \normENfluid{(\vec{u},p)}^2 + \normENtau{(\vec{u},\partial_t\vec{d})}^2\right]^{1/2},
\end{align}
where
\begin{align*}
    \normENporoel{(\vec{d},\{p_\jj\}_{\jj\in J})}
    &= \left[\|\sqrt{\rho_\PP}\partial_t\vec{d}(t)\|_{\Omega_\PP}^2 + \normDGd{\vec{d}(t)}^2
    \phantom{\sum_{\jj\in J}\int_0^t}\right.\\&\qquad\left.
    + \sum_{\jj\in J}\left(\|\sqrt{c_\jj}p_\jj(t)\|_{\Omega_\PP}^2 + \int_0^t\left(\normDGj{p_\jj(s)}^2+\|\sqrt{\beta^\text{e}_\jj}p_\jj(s)\|_{\Omega_\PP}^2\right)ds\right)\right]^{1/2},
    \\
    \normENfluid{(\vec{u},p)}
    &= \left[\|\sqrt{\rho_\FF}\vec{u}(t)\|_{\Omega_\FF}^2 + \int_0^t\left(\normDGu{\vec{u}(s)}^2 +\normDGp{p(s)}^2
    \right)ds\right]^{1/2},
    \\
    \normENtau{(\vec{u},\partial_t\vec{d})}
    &= \left[\int_0^t\mathcal G\left(\vec{u}(s)-\partial_t\vec{d}(s),\vec{u}(s)-\partial_t\vec{d}(s)\right)ds\right]^{1/2}.
\end{align*}
{It can be shown that $ \normEN{(\vec{d},\{p_\jj\}_{\jj\in J},\vec{u},p)}$, although being only a semi-norm over $\mathscr{D}\times\mathscr{P}\times\mathscr{V}\times\mathscr{Q}$, is an actual norm over the semi-discrete space $H^2(0,T;\spaceW)\times H^1(0,T;\spaceQj)\times H^1(0,T;\spaceV)\times L^2(0,T;\spaceQ)$ for any value of $\gamma\geq 0$, thanks to the element-wise regularity of polynomials.
Consequently, the following stability result ensures the uniqueness of the semi-discrete solution of problem \eqref{eq:DG}.}
\begin{theorem}[Stability estimate]\label{th:stab}
Under \cref{hp} and assuming that sufficiently large values are chosen for
the penalty constants (cf.~\eqref{eq:penaltyparams}), the solution $(\vec{d}_h,\{p_{\jj,h}\}_{\jj\in J},\vec{u}_h,p_h)$ of the semidiscrete problem \eqref{eq:DG} fulfills the following inequality for each time $t\in(0,T]$:
\begin{equation}\label{eq:stab}\begin{aligned}
    \normEN{(\vec{d}_h,\{p_{\jj,h}\}_{\jj\in J},\vec{u}_h,p_h)} \lesssim
    & \normENzero{(\vec{d}_h,\{p_{\jj,h}\}_{\jj\in J},\vec{u}_h,0)}
    \\& + \int_0^t\left(\frac{1}{\sqrt{\rho_\PP}}\|\vec{f}_\PP\|_{\Omega_\PP}+\sum_{\jj\in J}\frac{1}{\sqrt{c_\jj}}\|g_\jj\|_{\Omega_\PP}+\frac{1}{\sqrt{\rho_\FF}}\|\vec{f}_\FF\|_{\Omega_\FF}\right)ds,
\end{aligned}\end{equation}
where the first term depends on the initial conditions \eqref{eq:mpeinit}-\eqref{eq:fluidinit}:
\[
\normENzero{(\vec{d}_h,\{p_{\jj,h}\}_{\jj\in J},\vec{u}_h,0)} = \left[\|\sqrt{\rho_\PP}\dot{\vec{d}}_h^0\|_{\Omega_\PP}^2 + \normDGd{\vec{d}_h^0}^2 + \sum_{\jj\in J}\|\sqrt{c_\jj}p_{\jj,h}^0\|_{\Omega_\PP}^2 + \|\sqrt{\rho_\FF}\vec{u}_h^0\|_{\Omega_\FF}^2\right]^{1/2}.
\]
\end{theorem}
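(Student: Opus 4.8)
The plan is to run the classical energy argument for the semidiscrete problem, testing \eqref{eq:DG} with the discrete solution itself. Concretely, I would choose $\vec{w}_h=\partial_t\vec{d}_h$, $q_{\jj,h}=p_{\jj,h}$ for every $\jj\in J$, $\vec{v}_h=\vec{u}_h$ and $q_h=p_h$, and sum all resulting contributions. The inertial and symmetric diffusive terms then assemble into time derivatives of the quantities appearing in $\normENporoel{\cdot}$ and $\normENfluid{\cdot}$: indeed $(\rho_\PP\partial_{tt}^2\vec{d}_h,\partial_t\vec{d}_h)_{\Omega_\PP}=\tfrac12\tfrac{d}{dt}\|\sqrt{\rho_\PP}\partial_t\vec{d}_h\|_{\Omega_\PP}^2$, the symmetric elastic part of $\mathcal{L}_\PP$ gives $\tfrac12\tfrac{d}{dt}\normDGd{\vec{d}_h}^2$ (up to norm equivalence, using the symmetry of the SIPG form and the piecewise-constant coefficients of \cref{hp}), the storage terms give $\tfrac12\tfrac{d}{dt}\sum_\jj\|\sqrt{c_\jj}p_{\jj,h}\|_{\Omega_\PP}^2$, and $(\rho_\FF\partial_t\vec{u}_h,\vec{u}_h)_{\Omega_\FF}=\tfrac12\tfrac{d}{dt}\|\sqrt{\rho_\FF}\vec{u}_h\|_{\Omega_\FF}^2$. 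Here the hypothesis of sufficiently large penalty constants enters, being exactly what is needed for the symmetric DG forms to be bounded below by the broken norms.

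The crucial observation is that all the coupling terms either cancel or contribute with a favourable sign. The Biot-type coupling terms contained in $\mathcal{L}_\PP$ and $\mathcal{L}_\jj$ cancel between the elasticity and the pressure equations; the two interface contributions $\mathcal{J}(p_{\EE,h},\partial_t\vec{d}_h,\vec{u}_h)$ and $-\mathcal{J}(p_{\EE,h},\partial_t\vec{d}_h,\vec{u}_h)$ cancel identically under the chosen test functions; and the velocity--pressure terms of the Stokes subproblem do not contribute to the energy balance, as usual. What survives on the dissipative side is non-negative: the Darcy terms produce $\sum_\jj\normDGj{p_{\jj,h}}^2$, the viscous term produces $\normDGu{\vec{u}_h}^2$ (again by coercivity, hence the penalty requirement), the drainage and inter-compartment exchange produce $\sum_\jj\|\sqrt{\beta_\jj^{\rm e}}p_{\jj,h}\|_{\Omega_\PP}^2$ plus a symmetric positive-semidefinite quadratic form in the differences $p_{\jj,h}-p_{\kk,h}$, and, most importantly, the BJS form contributes $\mathcal{G}(\vec{u}_h-\partial_t\vec{d}_h,\vec{u}_h-\partial_t\vec{d}_h)\geq0$, which is precisely the integrand of $\normENtau{\cdot}$.

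These steps yield an energy identity of the form $\tfrac12\tfrac{d}{dt}E(t)^2+D(t)=\mathcal{F}_\PP(\partial_t\vec{d}_h)+\sum_\jj\mathcal{F}_\jj(p_{\jj,h})+\mathcal{F}_\FF(\vec{u}_h)$, where $E(t)^2$ collects the kinetic, elastic and storage energies (so that $E(0)=\normENzero{(\vec{d}_h,\{p_{\jj,h}\}_{\jj\in J},\vec{u}_h,0)}$) and $D(t)\geq0$ collects all the dissipation above. I would bound the right-hand side by Cauchy--Schwarz, matching each source to the corresponding kinetic or storage norm, e.g.\ $\mathcal{F}_\FF(\vec{u}_h)\leq\rho_\FF^{-1/2}\|\vec{f}_\FF\|_{\Omega_\FF}\,\|\sqrt{\rho_\FF}\vec{u}_h\|_{\Omega_\FF}\leq\rho_\FF^{-1/2}\|\vec{f}_\FF\|_{\Omega_\FF}\,E(t)$, and likewise for $\vec{f}_\PP$ and $g_\jj$. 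Since, after the cancellations, no term proportional to $E(t)^2$ survives on the right-hand side, dividing by $E(t)$ and integrating in time gives directly $E(t)\lesssim E(0)+\int_0^t(\cdots)\,ds$ with exactly the forcing combination of \eqref{eq:stab}; integrating the full identity once more then controls $\int_0^t D\,ds$. Thus a direct integration in time, rather than a Grönwall argument, already reproduces the structure of \eqref{eq:stab} for every component of $\normEN{\cdot}$ except the pressure (Grönwall being needed only if residual lower-order couplings failed to cancel).

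The main obstacle is the control of $\int_0^t\normDGp{p_h}^2\,ds$, since $p_h$ is only a Lagrange multiplier for incompressibility and does not enter the energy identity. I would recover it through a discrete inf--sup argument: at each time $t$, I isolate the momentum balance by taking $\vec{w}_h=\vec{0}$, $q_h=0$ and $q_{\jj,h}=0$ in \eqref{eq:DG}, and I pick $\vec{v}_h$ realizing the (stabilized) discrete inf--sup condition for the velocity--pressure form in $\mathcal{L}_\FF$, so that the pairing with $p_h$ controls $\normDGp{p_h}^2$ while $\normDGu{\vec{v}_h}\lesssim\normDGp{p_h}$. Testing the momentum balance with this $\vec{v}_h$ then bounds the pressure norm by $\|\partial_t\vec{u}_h\|_{\Omega_\FF}$, $\normDGu{\vec{u}_h}$, the interface terms $\mathcal{J}$ and $\mathcal{G}$, and $\vec{f}_\FF$, all already estimated; integrating in time yields the missing contribution. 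The delicate points are ensuring that the inf--sup test function interacts consistently with the interface forms $\mathcal{J}$ and $\mathcal{G}$, and tracking simultaneously the several penalty thresholds needed for coercivity of all the DG forms, which together fix the hidden constant in \eqref{eq:stab}.
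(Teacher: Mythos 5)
Your proposal is correct and follows essentially the same route as the paper: the paper's proof consists precisely of testing \eqref{eq:DG} with $(\partial_t\vec{d}_h,\{p_{\jj,h}\}_{\jj\in J},\vec{u}_h,p_h)$, invoking the energy/cancellation/inf--sup machinery of \cite[Theorem 4.1]{fumagalli2024polytopal} for everything except the BJS term, and then observing, as you do, that the surviving $\mathcal G(\vec{u}_h-\partial_t\vec{d}_h,\vec{u}_h-\partial_t\vec{d}_h)$ contribution is nonnegative and coincides with $\normENtau{(\vec{u}_h,\partial_t\vec{d}_h)}^2$. Your expanded account of the cancellations, the Cauchy--Schwarz treatment of the sources, and the inf--sup recovery of $\int_0^t\normDGp{p_h}^2\,ds$ is exactly the content the paper delegates to that citation.
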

\begin{proof}
Choosing the test functions $\vec{w}_h=\partial_t\vec{d}_h(t), \vec{v}_h=\vec{u}_h(t), q_h=p_h(t), q_{\jj,h}=p_{\jj,h}(t)\ \forall\jj\in J$ in the semidiscrete problem \eqref{eq:DG} and following the arguments of \cite[Theorem 4.1]{fumagalli2024polytopal}, we obtain the following inequality:
\[
    \normENporoel{(\vec{d},\{p_\jj\}_{\jj\in J})}^2 
    + \normENfluid{(\vec{u},p)}^2 + \int_0^t\mathcal G\left(\vec{u}_h(s)-\partial_t\vec{d}_h(s),\vec{u}_h(s)-\partial_t\vec{d}_h(s)\right)ds
    \leq \text{RHS},
\]
where $\text{RHS}$ denotes the right-hand side of \eqref{eq:stab}.
Observing that the term with the form $\mathcal G$ coincides with the definition of $\normENtau{(\vec{u}_h,\partial_t\vec{d}_h)}^2$ concludes the proof.
\end{proof}

\begin{remark}
We point out that the semi-discrete problem \eqref{eq:DG} is stable in the Navier-Stokes case, too, since the advection form $N_\FF$ is skew-symmetric and thus cancels out in the proof of \cref{th:stab}.
\end{remark}

\bigskip

To derive an a-priori estimate for the space discretization error of the proposed PolyDG method, we rely on the following additional norms
for non-discrete functions:
\[\begin{aligned}&\begin{aligned}
\normcontD{\vec{w}}^2 &= \normDGd{\vec{w}}^2 + \|\eta^{-1/2}\average{\sigma_\PP(\vec{w})}\|_{\mathscr F_{\PP,h}^\text{I}\cup\mathscr F_{\PP,h}^\text{D}}^2
&\qquad\forall \vec{w}\in [H^2(\mathscr T_{h,\PP})]^d,\\
\normcontJ{q_\jj}^2 &= \normDGj{q_\jj}^2 + \|\zeta^{-1/2}\average{\frac{1}{\mu_\jj}k_\jj\nabla_hq_\jj}\|_{\mathscr F_{\PP,h}^\text{I}\cup\mathscr F_{\PP,h}^{\text{D}_\jj}}^2
&\qquad\forall q_\jj\in H^2(\mathscr T_{h,\PP}),
\qquad \forall\jj\in J,\\
\normcontU{\vec{v}}^2 &= \normDGu{\vec{v}}^2 + \|\gamma_{\vec{v}}^{-1/2}\average{\tau_\FF(\vec{v})}\|_{\mathscr F_{\FF,h}^\text{I}\cup\mathscr F_{\FF,h}^\text{D}}^2
&\qquad\forall \vec{v}\in [H^2(\mathscr T_{h,\FF})]^d,\\
\normcontP{q}^2 &= \normDGp{q}^2 + \|{\gamma_p^{1/2}}\average{q}\|_{\mathscr F_{\FF,h}^\text{I}}^2
&\qquad\forall q\in H^1(\mathscr T_{h,\FF}),
\end{aligned}\\
&\normcont{(\vec{w},\{q_\jj\}_{\jj\in J},\vec{v},p)}^2 = \normcontD{\vec{w}}^2+\sum_{\jj\in J}\normcontJ{q_\jj}^2 + \normcontU{\vec{v}}^2+\normcontP{q}^2.
\end{aligned}\]
Moreover, we denote by $\mathscr E_K: H^s(\Omega)\to H^s(\mathbb R^d)$ the Stein extension operator from a
Lipschitz domain $\Omega$ defined in \cite{stein1970singular}, for which optimal interpolation results can be proven w.r.t.~the norms defined above (cf.~\cref{sec:stein}).
Combining the results above, we can prove the following optimal convergence estimate:
\begin{theorem}[A priori error estimate]\label{th:conv}
    Let us assume that \cref{hp} holds and that the penalty parameters
included in the discrete problem's formulation are sufficiently large (see \eqref{eq:penaltyparams}).
If the solution of problem \eqref{eq:weak}
is sufficiently regular,
    the following estimate holds for each $t\in (0,T]$: 
    \begin{equation}\label{eq:error}
    \begin{aligned}
    &
    \normEN{(\vec{e}^\vec{d},\{e^{p_\jj}\}_{\jj\in J},\vec{e}^\vec{u},e^{p})}^2
    \\ &\qquad
    \lesssim \sum_{K\in\mathscr T_{h,\PP}}
    {h_K^{2m}}\left\{
    \|\mathcal E_K\vec{d}(t)\|_{[H^{m+1}(\widehat{K})]^d}^2 + \sum_{\kk\in J}\|\mathcal E_K p_\kk(t)\|_{H^{m+1}(\widehat{K})}^2
    \right.
    \\ &\qquad
    \qquad\qquad\left.
    +\int_0^t\left[
    \|\mathcal E_K\partial_t\vec{d}(s)\|_{[H^{m+1}(\widehat{K})]^d}^2+\|\mathcal E_K\partial_{tt}^2\vec{d}(s)\|_{[H^{m+1}(\widehat{K})]^d}^2
    \right]ds
    \right.
    \\ &\qquad
    \qquad\qquad\left.
    +\int_0^t\left.
    \sum_{\kk\in J}\left(
    \|\mathcal E_Kp_\kk(s)\|_{H^{m+1}(\widehat{K})}^2+\|\mathcal E_K\partial_tp_\kk(s)\|_{H^{m+1}(\widehat{K})}^2
    \right)
    \right.ds
    \right\}
    \\ &\qquad
    \quad+ \sum_{K\in\mathscr T_{h,\FF}}
    {h_K^{2m}}\left.
    \int_0^t\left[
    \|\mathcal E_K\vec{u}(s)\|_{[H^{m+1}(\widehat{K})]^d}^2 + \|\mathcal E_K\partial_t\vec{u}(s)\|_{[H^{m+1}(\widehat{K})]^d}^2
    \right.\right.\\ &\qquad\qquad\qquad\qquad\qquad\left.\left.\phantom{\|^2_{H^{m+1}}}
    +\|\mathcal E_Kp(s)\|_{H^{m+1}(\widehat{K})}^2
    \right]ds,
    \right.
    \end{aligned}
    \end{equation}
    where $\vec{e}^\vec{d}=\vec{d}-\vec{d}_h, e^{p_\jj}=p_\jj-p_{\jj,h}\ \forall \jj\in J, \vec{e}^\vec{u}=\vec{u}-\vec{u}_h, e^{p}=p - p_h$,
and $\widehat{K}\supseteq K$, for each $K\in\mathscr T_h$, are shape-regular simplexes covering $\mathscr T_h$, as in \cref{hp}.
\end{theorem}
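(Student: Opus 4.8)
The plan is to carry out the standard energy-based error analysis for DG discretizations of coupled time-dependent problems, reusing the mechanism of the stability proof (\cref{th:stab}). First I would split each error into an approximation part and a discrete part: writing $\vec{e}^\vec{d} = (\vec{d}-\vec{d}_I) + (\vec{d}_I-\vec{d}_h) =: \vec{e}_I^\vec{d}+\vec{e}_h^\vec{d}$, and analogously $e^{p_\jj}=e_I^{p_\jj}+e_h^{p_\jj}$, $\vec{e}^\vec{u}=\vec{e}_I^\vec{u}+\vec{e}_h^\vec{u}$, $e^{p}=e_I^{p}+e_h^{p}$, where $(\cdot)_I$ is the $L^2$-orthogonal projection of the Stein extension onto the discrete spaces, for which the optimal bounds of \cref{sec:stein} hold. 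Since the PolyDG scheme is strongly consistent (the sufficiently regular exact solution satisfies \eqref{eq:DG}, its interface and interelement fluxes being single-valued), subtracting the equation satisfied by the exact solution from \eqref{eq:DG} yields a Galerkin-orthogonality identity for the total error; moving the interpolation part to the right-hand side produces an error equation whose unknown is the discrete error and whose forcing is a sum of the forms of \eqref{eq:DG} applied to the (known) interpolation error.

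Second, I would test this error equation with the discrete error, exactly as in \cref{th:stab}, i.e. $\vec{w}_h=\partial_t\vec{e}_h^\vec{d}$, $q_{\jj,h}=e_h^{p_\jj}$, $\vec{v}_h=\vec{e}_h^\vec{u}$, $q_h=e_h^{p}$, and integrate in time over $(0,t)$. For penalty constants large enough the left-hand side reproduces $\normEN{(\vec{e}_h^\vec{d},\{e_h^{p_\jj}\}_{\jj\in J},\vec{e}_h^\vec{u},e_h^{p})}^2$ up to constants: the inertial and storage terms yield the kinetic/storage energies, $\mathcal L_\PP,\mathcal L_\jj,\mathcal L_\FF$ reconstruct the coercive broken norms, the poroelastic coupling $b_\jj$ and the interface form $\mathcal J$ cancel by their designed skew structure, and $\mathcal G$ produces the nonnegative tangential contribution $\normENtau{(\vec{e}_h^\vec{u},\partial_t\vec{e}_h^\vec{d})}^2$. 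Since we work in the Stokes regime the advection form is absent, so no nonlinear term has to be handled.

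Third, I would bound the forcing. Each form applied to the interpolation error is controlled by continuity with respect to the stronger norms $\normcont{\cdot}$, followed by Cauchy--Schwarz and Young's inequality so that the discrete-error factors are absorbed into the coercive left-hand side. The terms in which the interpolation error is paired with a time derivative of the discrete error through the coupling, chiefly $b_\jj(e_I^{p_\jj},\partial_t\vec{e}_h^\vec{d})$ (and its interface analogue inside $\mathcal J$), are integrated by parts in time, so that the surviving factor is $\vec{e}_h^\vec{d}(t)$, controlled by $\normDGd{\vec{e}_h^\vec{d}(t)}$ at the price of the derivative $\partial_t e_I^{p_\jj}$ in the forcing, which is the origin of the $\partial_t p_\kk$ terms in \eqref{eq:error}; the conjugate coupling $b_\jj(e_h^{p_\jj},\partial_t\vec{e}_I^\vec{d})$ is already in time-integrated form with the controlled factor $e_h^{p_\jj}$ and directly produces the $\partial_t\vec{d}$ term. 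The inertial pairing $(\rho_\PP\partial_{tt}^2\vec{e}_I^\vec{d},\partial_t\vec{e}_h^\vec{d})$ is estimated directly by Cauchy--Schwarz and Young, yielding the $\partial_{tt}^2\vec{d}$ contribution and a $\|\sqrt{\rho_\PP}\partial_t\vec{e}_h^\vec{d}\|^2$ term deferred to Grönwall. After absorption, the residual discrete-error terms that are not under a time integral are dispatched by Grönwall's lemma, bounding $\normEN{(\vec{e}_h^\vec{d},\{e_h^{p_\jj}\}_{\jj\in J},\vec{e}_h^\vec{u},e_h^{p})}$ purely in terms of norms of the interpolation error.

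Finally, I would insert the optimal interpolation estimates of \cref{sec:stein}, which bound every interpolation-error norm, including the interface trace contributions hidden in $\normcont{\cdot}$ and in $\mathcal G$ via discrete and continuous trace inequalities, by $h_K^m$ times an $H^{m+1}(\widehat K)$ norm of the corresponding Stein extension $\mathcal E_K$, and conclude by the triangle inequality $\normEN{\vec{e}}\le\normEN{\vec{e}_I}+\normEN{\vec{e}_h}$, which also accounts for the pointwise-in-time terms $\|\mathcal E_K\vec{d}(t)\|$ and $\|\mathcal E_K p_\kk(t)\|$ on the right-hand side of \eqref{eq:error}. The main obstacle I anticipate is the consistent treatment of the two new interface forms: $\mathcal G$ controls only the tangential velocity jump as a semi-norm, so estimating the interpolation error's contribution to it, and likewise controlling the pressure--velocity/displacement coupling in $\mathcal J$, requires careful interface trace bounds that must be reabsorbed without spoiling the cancellations exploited on the left-hand side; combined with the in-time integration by parts needed for the displacement-rate/pressure coupling, this is the delicate core of the argument, whereas the bulk poroelastic and fluid contributions follow the template of \cite{fumagalli2024polytopal}.
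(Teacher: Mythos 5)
Your proposal is correct and takes essentially the same route as the paper's proof: the same Stein-interpolant error splitting, subtracting the continuous from the semi-discrete problem and testing with $(\partial_t\vec{e}^\vec{d}_h,\{e^{p_\jj}_h\}_{\jj\in J},\vec{e}^\vec{u}_h,e^{p}_h)$, coercivity/continuity of the forms with the skew cancellations, and a separate trace-based Stein interpolation bound for the new tangential contribution through $\mathcal G$, closed by Cauchy--Schwarz and Young. The only difference is presentational: you spell out the integration-by-parts-in-time and Gr\"onwall steps that the paper delegates to \cite[Theorem 4.2]{fumagalli2024polytopal}, and you correctly identify the $\mathcal G$-term as the new delicate point, which the paper handles exactly as you propose.
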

\begin{proof}
Considering the continuous displacement $\vec{d}$ and its semi-discrete counterpart $\vec{d}_h$, we introduce the error splitting $\vec{d}-\vec{d}_h=\vec{e}^\vec{d}_I-\vec{e}^\vec{d}_h$, with $\vec{e}^\vec{d}_I=\vec{d}-\vec{d}_I\in[H^{m+1}(\mathscr T_{h,\PP})]^d$ being the Stein interpolation error (cf.~\cref{sec:stein}, \cref{th:interp}) and $\vec{e}^\vec{d}_h=\vec{d}_h-\vec{d}_I\in\spaceW$ being the approximation error.
Analogous definitions are introduced for $e^{p_\jj}\ \forall\jj\in J, \vec{e}^\vec{u}, e^{p}$.
We subtract the continuous problem \eqref{eq:weak} from the semi-discrete problem \eqref{eq:DG}, tested against $(\partial_t\vec{e}^\vec{d}_h,\{e^{p_{\jj}}_h\}_{\jj\in J},\vec{e}^\vec{u}_h,e^{p}_h)$, we apply the coercivity of the bilinear forms $\mathcal{L}_\PP,\mathcal{L}_\FF,\mathcal{L}_\jj,\jj\in J,$ and the continuity of these forms as well as of {$\mathcal J$} (cf.~\cite[Lemmas 2 and 5]{fumagalli2024polytopal}), and then we can follow the same steps of \cite[Theorem 4.2]{fumagalli2024polytopal} to arrive to the following inequality:
\begin{equation}\label{eq:fromOldPaper}
\begin{aligned}
\normENporoel{(\vec{e}^\vec{d}_h,\{e^{p_\jj}_h\}_{\jj\in J})}^2&+ \normENfluid{(\vec{e}^\vec{u}_h,e^p_h)}^2
+\int_0^t\mathcal G(\vec{e}_h^\vec{u}(s)-\partial_t\vec{e}_h^\vec{d}(s), \vec{e}_h^\vec{u}(s)-\partial_t\vec{e}_h^\vec{d}(s))\,ds\\
&\lesssim
\text{RHS} + \int_0^t\mathcal G(\vec{e}_I^\vec{u}(s)-\partial_t\vec{e}_I^\vec{d}(s), \vec{e}_h^\vec{u}(s)-\partial_t\vec{e}_h^\vec{d}(s))\,ds\\
&\lesssim \text{RHS} + \normENtau{(\vec{e}_I^\vec{u},\partial_t\vec{e}_I^\vec{d})}\normENtau{(\vec{e}_h^\vec{u},\partial_t\vec{e}_h^\vec{d})},
\end{aligned}
\end{equation}
where $\text{RHS}$ is the right-hand side of the thesis \eqref{eq:error}. We have used the linearity of $\mathcal G$ and the fact that the squared energy norm defined in this work corresponds to that of \cite{fumagalli2024polytopal} plus the tangential velocity squared seminorm $\normENtau{(\vec{e}^\vec{u}_h,\partial_t\vec{e}^\vec{d}_h)}^2=\int_0^t\mathcal G(\vec{e}_h^\vec{u}(s)-\partial_t\vec{e}_h^\vec{d}(s), \vec{e}_h^\vec{u}(s)-\partial_t\vec{e}_h^\vec{d}(s))\,ds$.
Now, since the seminorm $\normENtau{(\vec{e}_I^\vec{u},\partial_t\vec{e}_I^\vec{d})}$ can be controlled by the corresponding broken $L^2$ norm over the interface faces $F\in\mathscr F^\Sigma$, we can employ Stein interpolation results and obtain the following (see \cref{sec:stein}):
\[\begin{aligned}
&\normENtau{(\vec{e}_I^\vec{u},\partial_t\vec{e}_I^\vec{d})}
\\&\qquad\lesssim
\int_0^t\left(\|\vec{e}_I^\vec{u}(s)\|_{\mathscr F^\Sigma}+\|\partial_t\vec{e}_I^\vec{d}(s)\|_{\mathscr F^\Sigma}\right)\,ds
\\&\qquad\lesssim
\int_0^t\left(\sum_{\mbox{\scriptsize\ensuremath{\begin{gathered}K\in\mathscr T_{h,\FF}\\[-1ex]\partial K\cap\Sigma\neq\emptyset\end{gathered}}}}h_K^{m+1/2}\|\mathcal E\vec{u}(s)\|_{H^{m+1}(\widehat{K})}+\sum_{\mbox{\scriptsize\ensuremath{\begin{gathered}K\in\mathscr T_{h,\FF}\\[-1ex]\partial K\cap\Sigma\neq\emptyset\end{gathered}}}}h_K^{m+1/2}\|\mathcal E\partial_t\vec{d}(s)\|_{H^{m+1} (\widehat{K})}\right)\,ds,
\end{aligned}\]
where $\widehat{K}\supseteq K$, for each $K\in\mathscr T_h$, are shape-regular simplexes covering $\mathscr T_h$, existing thanks to \cref{hp}.
Then,
the application of Cauchy-Schwarz and Young inequalities on \eqref{eq:fromOldPaper} concludes the proof.
\end{proof}

\begin{remark}
It is worth to point out that, in the proof of \cref{th:conv}, the contribution of the interpolation error in the $\normENtau{\cdot}$ seminorm has a convergence order that is 1/2 greater than the other terms of the energy norm.
This means that the treatment of the BJS condition in the proposed numerical method does not affect convergence.
\end{remark}

\section{Fully discrete problem and implementation}\label{sec:fullydiscrete}
Starting from the semi-discrete problem \eqref{eq:DG}, we introduce a timestep $\Delta t$ and a corresponding time discretization over a uniform partition $\{t^n=n\Delta t\}_{n=0}^N$ of the interval $(0,T]$.
We use Newmark's $\beta$-method to discretize the terms tested against $\vec{w}_h$ in \eqref{eq:DG} (corresponding to the elastic momentum equation) and the Crank-Nicolson method for all the other terms.
The nonlinear advection term is linearized by a semi-implicit approach, using a second-order extrapolation of the advecting velocity at time $t^{n+\sfrac{1}{2}}$:
\begin{equation}\label{eq:linearization}
(\vec{u}\cdot\nabla)\vec{u}\ |_{t=t^{n+\sfrac{1}{2}}} \simeq \left[\left(\frac{3}{2}\vec{u}^n - \frac{1}{2}\vec{u}^{n-1}\right)\cdot\nabla\right]\frac{\vec{u}^{n+1}+\vec{u}^n}{2},
\end{equation}
where the superscript $\cdot^n$ denotes the discrete variable approximating $\vec{u}$ at time $t^n$.
A similar notation is used in the following for all the other variables.

In accordance with the above discretization methods, the  algebraic form of the fully discrete problem reads as follows:
\begin{equation}\label{eq:fullydiscr}
A_1(\vec{U}^{n},\vec{U}^{n-1}) \vec{X}^{n+1} = A_2(\vec{U}^{n},\vec{U}^{n-1})\vec{X}^n + \vec{F}^{n+1}, \qquad n=1,\dots,N,
\end{equation}
where
\begin{align}
\vec{X}^n &= \begin{bmatrix}
\vec{D}^n ; \vec{Z}^n ; \vec{A}^n ; \vec{P}_\text{A}^n ; \dots ; \vec{P}_\EE^n ; \vec{U}^n ; \vec{P}^n
\end{bmatrix},
\ 
\vec{F}^{n+1} = \begin{bmatrix}
\vec{F}_\PP^n ;
\vec{0} ;
\vec{0} ;
\vec{F}_\text{A}^\text{CN}; \dots ; \vec{F}_\EE^\text{CN} ; \vec{F}_\FF^\text{CN}; \vec{0}
\end{bmatrix}.
\end{align}
To employ Newmark's scheme, we have introduced two auxiliary vector variables $\vec{Z}^n, \vec{A}^n$ representing the first and second time derivatives of $\vec{D}^n$.
The definition of the matrices $A_1(\vec{U}^{n},\vec{U}^{n-1}),A_2(\vec{U}^{n},\vec{U}^{n-1})$ can be obtained with slight changes from those reported in \cite{fumagalli2024polytopal}.
Specifically, the Navier-Stokes advection term encompasses a modification of the diagonal block corresponding to the fluid velocity $\vec{U}$, while the BJS condition yields the addition of the following matrix in the diagonal blocks corresponding to the solid and fluid velocities $\vec{Z}$ and $\vec{U}$, and their mutual coupling:
\begin{align*}
[G_{\star,\text{\tiny$\triangle$}}]_{ij} &= \sum_{F\in\mathscr F^\Sigma}\int_F \frac{\gamma\mu_\FF}{\sqrt{k_\EE}}(\bm\varphi_\star^j)_\vec{\tau}\cdot(\bm\varphi_\text{\tiny$\triangle$}^i)_\vec{\tau},
\qquad
\star\in\{\PP,\FF\},
\quad
\text{\tiny$\triangle$}\in\{\PP,\FF\},
\end{align*}
where the integrand is expressed in terms of the basis functions of the discrete spaces:
$\spaceW={\rm span}\{\bm\varphi_\PP^i\}_{i=0}^{N_\PP}$, $\spaceV={\rm span}\{{\bm\varphi_\FF^i}\}_{i=0}^{N_\FF}$.

{The resulting discrete scheme \eqref{eq:fullydiscr} is implemented in \textit{FEniCS} - version 2019.1.0 1
\cite{logg2012automated,alnaes2015fenics}, hinging upon the library \textit{multiphenics} (\url{https://github.com/multiphenics/multiphenics}) to deal with the multiphysics nature of the problem.
Although the method can be employed on general polyhedral meshes (as shown in \cref{sec:polyvsstd}), the construction of a three-dimensional polyhedral mesh of a generic domain with internal interfaces is a matter of active research, thus the meshes used to obtain the numerical results of \cref{sec:conv,sec:stokes,sec:navier} are made of tetrahedral elements, generated by means of Gmsh \cite{gmsh}(\url{https://gmsh.info/}).
Nevertheless, to assess the advantages of employing a polyhedral mesh w.r.t.~more classical ones, in an in-house library we implemented a simplified single-physics problem, for which we could generate actually polyhedral meshes: the results of this assessment are reported in \cref{sec:polyvsstd}.}

\section{Verification of convergence estimates}\label{sec:conv}

To verify the theoretical error estimates of \cref{th:conv} and the implemented solver, we report convergence tests on a simplified mesh made of two juxtaposed unit cubes $\Omega_\FF=(0,1)\times(1,0)\times(-1,0), \Omega_\PP=(0,1)^3$, where the interface is $\Sigma=\{\vec{x}=(x,y,z)\colon z=0, (x,y)\in(0,1)^2\}$.
Considering only one fluid compartment in the poroelastic system, namely $J=\{\EE\}$, and setting all physical coefficients of \cref{tab:modelparams} to be equal to 1, except for $\alpha_\EE=0.5$, the following is an exact solution of problem \eqref{eq:interf}-\eqref{eq:NSMPE} for suitable expressions of the source functions $\vec{f}_\PP,g_\EE,\vec{f}_\FF$, the boundary data, 
and the initial conditions $\vec{d}_0,\dot{\vec{d}}_0,p_{\EE0},\vec{u}_0$:
\begin{equation}\label{eq:exactsol}\begin{gathered}
\vec{u}(\vec{x},t) = (2t-t^2)e^{-t}
    \begin{bmatrix} y^Mz^M \\ x^Mz^M \\ \xi \end{bmatrix},
\qquad
\vec{d}(\vec{x},t) = t^2e^{-t}
    \begin{bmatrix} y^Mz^M \\ x^Mz^M \\ -\xi \end{bmatrix},
\\
p(\vec{x},t) = p_\EE(\vec{x},t) = (1-e^{-t})z^M.
\end{gathered}\end{equation}
In particular, we choose $M=5$ and $\xi=1$ and impose Dirichlet conditions for all variables on the whole of $\partial\Omega$.

We simulate 5 time steps with step length $\Delta t=10^{-3}$, chosen small enough to avoid spoiling convergence w.r.t.~space discretization.
In \cref{fig:conv3Dunsteady}, we report the computed errors in the energy norm \eqref{eq:normcoerc}.
The results agree with the convergence order $h^m, m=1,2,3$ predicted by \cref{th:conv}.

\begin{figure}
    \centering
    \begin{tikzpicture}
        \begin{loglogaxis}[width=0.67\textwidth,%
        height=0.5\textwidth,%
        xlabel={$h$},%
        axis line style = thin,%
        font=\scriptsize,%
        legend columns=3,%
        legend style={anchor=south east, at = {(0.95,0.0)},
        draw=none, fill=none}]
            \addplot[color=blue,mark=o] table [x=h, y=errrelP1, col sep=comma]{convStokesPoroelasticityN4N16dt0p001.csv};
            \addlegendentry{$m$=1};
            \addplot[color=red,mark=+] table [x=h, y=errrelP2, col sep=comma]{convStokesPoroelasticityN4N16dt0p001.csv};
            \addlegendentry{$m$=2};
            \addplot[color=olive,mark=square] table [x=h, y=errrelP3, col sep=comma]{convStokesPoroelasticityN4N16dt0p001.csv};
            \addlegendentry{$m$=3};
            \addplot[color=blue,dashed] table [x=h, y=h1, col sep=comma]{convStokesPoroelasticityN4N16dt0p001.csv};
            \addlegendentry{$h^1$};
            \addplot[color=red,dashed] table [x=h, y=h2, col sep=comma]{convStokesPoroelasticityN4N16dt0p001.csv};
            \addlegendentry{$h^2$};
            \addplot[color=olive,dashed] table [x=h, y=h3, col sep=comma]{convStokesPoroelasticityN4N16dt0p001.csv};
            \addlegendentry{$h^3$};
        \end{loglogaxis}
    \end{tikzpicture}
    \caption{Verification test {of \cref{sec:conv}}: computed relative errors in the energy norm \eqref{eq:normcoerc} versus $h$ for different polynomial degrees $m=1,2,3$ (log-log scale).}
    \label{fig:conv3Dunsteady}
\end{figure}

\section{{Computational advantages of employing polyhedral meshes}}\label{sec:polyvsstd}

\begin{figure}
\centering
\includegraphics[width=0.9\textwidth]{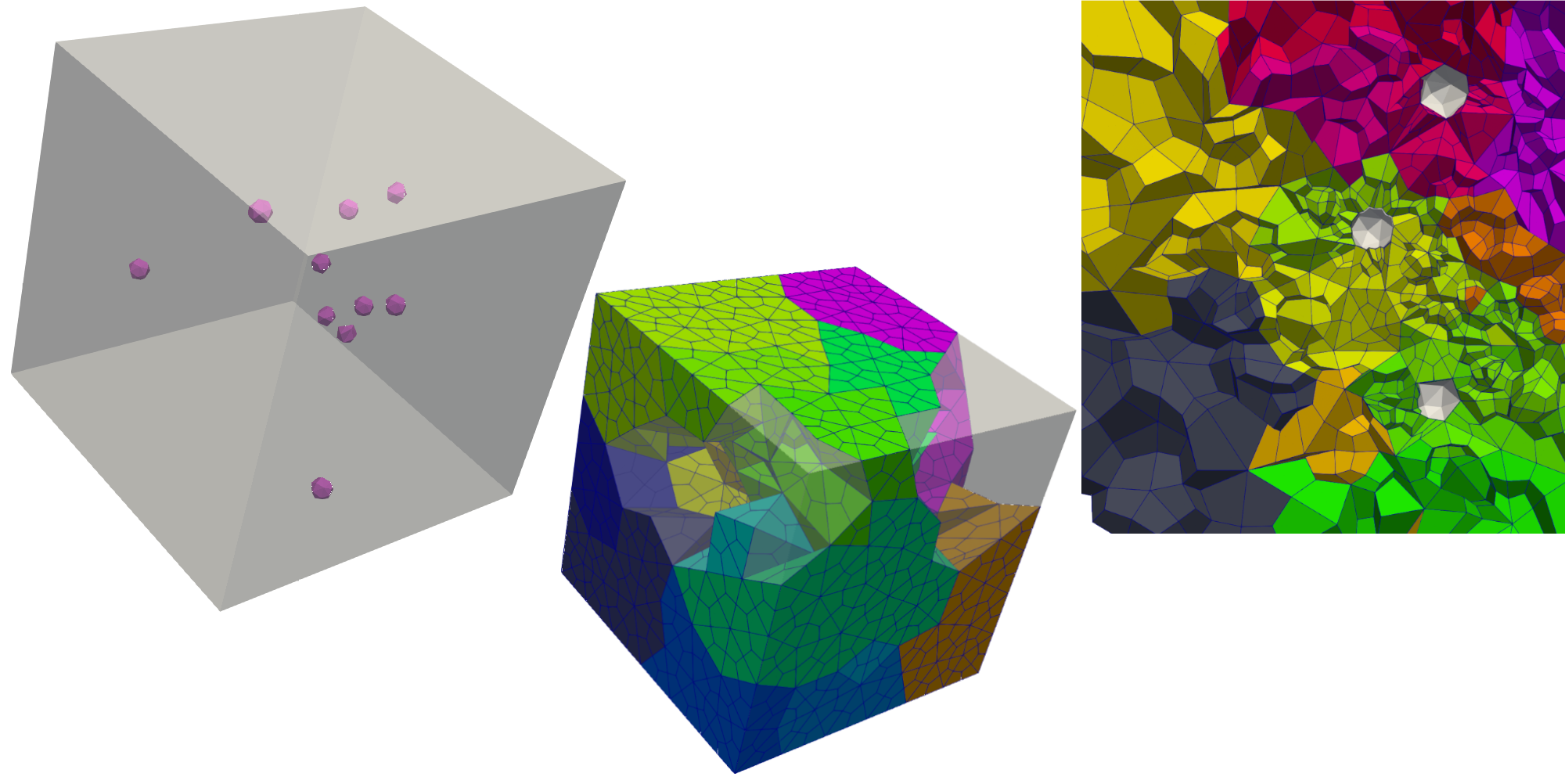}
\caption{{Test case of \cref{sec:polyvsstd}. Left: computational domain with small inclusions (magenta holes). Center: hexahedral mesh $\mathscr T_{26816}$ colored according to the agglomerated polyhedral mesh $\mathscr T_{45}$. Right: clip and zoom of $\mathscr T_{45}$ close to some inclusions (in white).}}
\label{fig:polyvsstd-mesh}
\end{figure}

{
Before applying the PolyDG method to solve the fluid-poromechanics problem of interest, it is important to assess the pros and cons of employing a polytopal mesh instead of a more standard tetrahedral/hexahedral one.
In particular, we focus on the computational cost entailed by the two approaches on a simplified problem that retains some of the characteristics of the application of interest.
Aiming to focus on the specifics of the numerical discretization method, we solve a Poisson problem in this domain, with an exact solution
$
u_\text{ex}(x,y,z)=\sin(\pi x)\sin(\pi y)\sin(\pi z),
$
enforcing Dirichlet boundary conditions on the whole boundary.
To assess the suitability of the methods on a domain with small geometrical details of characteristic size $\epsilon$, we consider the computational domain depicted in \cref{fig:polyvsstd-mesh}, which contains 10 randomly placed inclusions of diameter $\epsilon$.
In this domain, we generate a hexahedral mesh $\mathscr T_{26816}$ of 26816 elements by means of Gmsh \cite{gmsh}.
Over there, we define a tensor-product DG space and quadrature formulas for any finite element polynomial degree $m$.
From $\mathscr T_{26816}$, two polyhedral grids $\mathscr T_{45}, \mathscr T_{90}$ are generated by agglomeration via METIS \cite{metis}: see \cref{fig:polyvsstd-mesh}.

We remark that the hexahedral mesh needs to have a resolution proportional to the inclusion diameter $\epsilon$, at least close to the inclusions.
On the other hand, the size of the polyhedral mesh elements is independent of $\epsilon$, thanks to the possibility of handling elements with many small faces.

The complete DG discretization of the problem is reported in \cref{sec:poisson}: the same numerical scheme is applied to all meshes, and all the simulations discussed in this section were run on a single CPU.

To compare the polyhedral approach to the hexahedral one in terms of both accuracy and computational efficiency, \cref{fig:polyvsstd-results} reports the results of convergence tests w.r.t.\~the polynomial degree $m$: for each of the mesh introduced above, we plot the convergence errors $E_{L^2}=\|u_\text{ex}-u_h\|_{L^2(\Omega)}, E_{H^1}=\|\nabla u_\text{ex}-\nabla_hu_h\|_{L^2(\Omega)}$ and the computational time of the simulations against the number of degrees of freedom (DOFs) entailed by each choice of $m$.

Observing the computed errors, we can notice that the polyhedral approach requires significantly fewer DOFs than the hexahedral one to attain a given level of accuracy.
For example, an error $E_{L^2}$ of less than $\SI{5e-4}{}$ is achieved with either 3780 DOFs (with $m=6$ over $\mathscr T_{45}$) or 5040 DOFs (with $m=5$ over $\mathscr T_{90}$) using an agglomerated polyhedral mesh, while 268160 DOFs are required when using a hexahedral mesh ($m=2$ over $\mathscr T_{26816}$).
Indeed, to capture the geometrical detail of the small inclusions, many hexahedral elements are required in some regions, leading to a large system without gaining in approximation; on the other hand, a single polyhedron with many small faces can preserve the same geometrical (and computational) accuracy.

In terms of computational time, we discuss separately the times required to assemble the system and to solve it, both reported in \cref{fig:polyvsstd-results}.
We notice that solving the significantly smaller systems associated with the polyhedral meshes requires a much smaller computational time than in the hexahedral case.
Regarding the assembly phase, times seem comparable between the polyhedral and hexahedral approaches.
Our current implementation of the 3D solver relies on a quadrature rule based on the sub-tessellation of each polyhedral element into hexahedra. 
Yet, the assembly could be made much more efficient by the quadrature-free strategy proposed in \cite{AntoniettiHoustonPennesi_18}: two-dimensional tests in the open-source library \texttt{lymph} showed more than 20\% reduction in assembly time when using the quadrature-free strategy instead of sub-tessellation \cite{lymph} (\url{https://lymph.bitbucket.io/}), and 3D implementations may yield even further reductions thanks to the recursive nature of this strategy \cite{AntoniettiHoustonPennesi_18}.

To complete this discussion, we point out that agglomeration and setup of the PolyDG method over both $\mathscr T_{45}$ and $\mathscr T_{90}$ required less than 20 seconds in all cases, and the setup for the hexahedral solver was up to 50 seconds.
Moreover, the PolyDG method can also be beneficial in terms of memory requirements, in the case of geometrically detailed domains.
Indeed, the linear system to be assembled is smaller than in the hexahedral case, and it is still sparse with a limited bandwidth. 

The results and discussion presented here can be extended to the more complex case of the fluid-poromechanics problem discussed in the rest of the manuscript:
a realistic brain geometry would contain very fine geometrical details, and since the brain and CSF are not characterized by very large displacements or turbulent flows, the solution of problem \eqref{eq:NSMPE} is relatively smooth.
Moreover, all the conclusions on computational costs would extend to the larger systems solved on a parallel cluster: mesh agglomeration entails a negligible computational effort w.r.t.~assembly or solution of the linear system -- and it needs to be performed only once, even in time-dependent problems -- therefore it can be operated separately on the sub-mesh pertaining to each processor, resulting in negligible inter-process communication overhead.
}

\begin{figure}
\centering
\includegraphics[width=\textwidth]{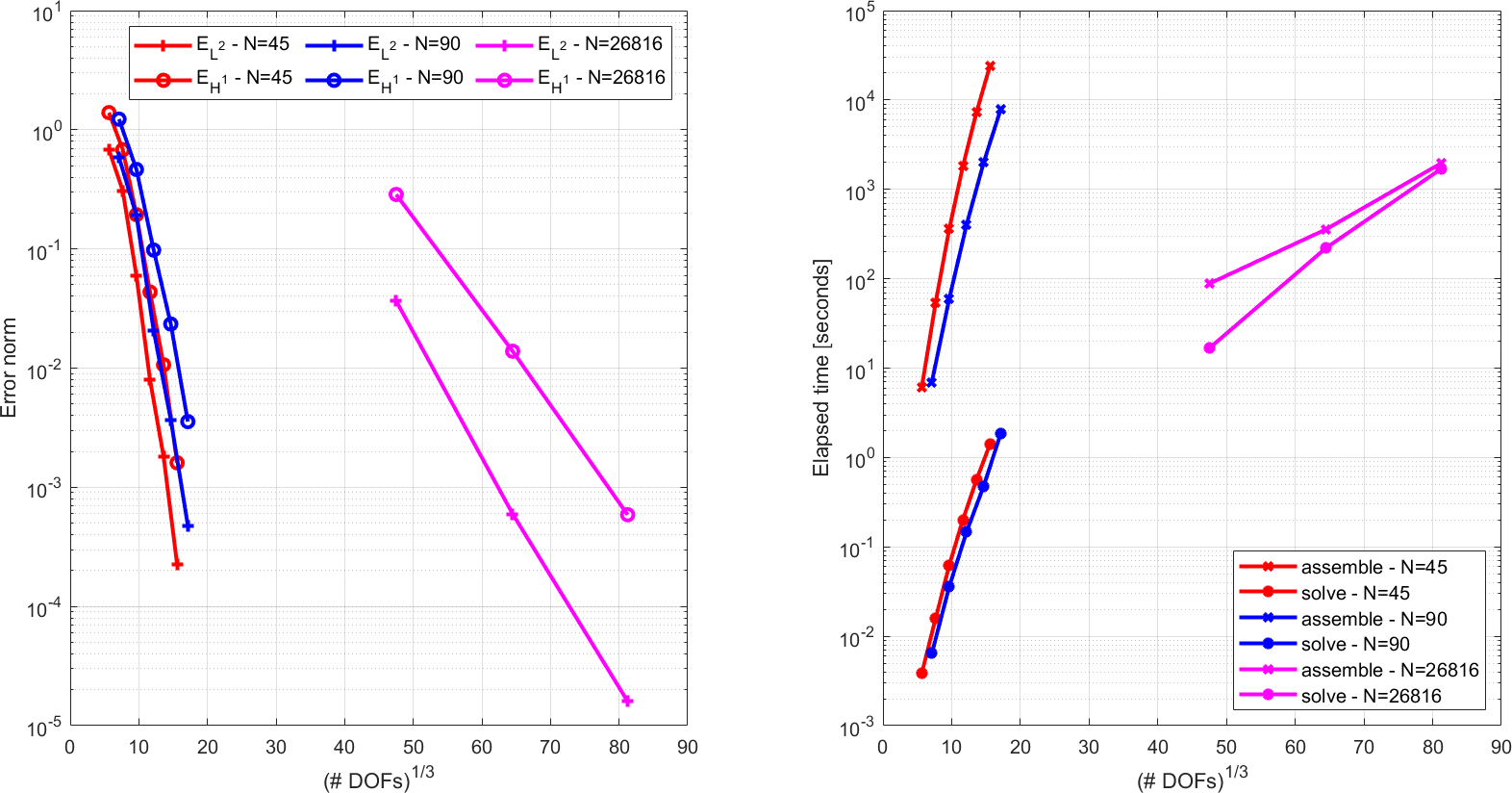}
\caption{{Test case of \cref{sec:polyvsstd}: computational costs using a hexahedral mesh of $N=26816$ elements and two different polyhedral meshes of $N=45$ or $N=91$ elements, for polynomial degrees $m=1,2,3$ (on the hexahedral mesh) and $m=1,2,3,4,5,6$ (on the polyhedral meshes).
Left: convergence errors $E_{L^2}, E_{H^1}$. Right: computational time for assembling the linear system and its solution.}}
\label{fig:polyvsstd-results}
\end{figure}

\section{Application to brain fluid-poromechanics: physiological conditions in idealized geometry}\label{sec:stokes}

In this section, we consider the CSF modeling by the Stokes equations (namely we neglect the nonlinear advection term in \eqref{eq:NSMPE}) and
we apply our multi-physics model to physiological settings, focusing on the CSF compartment.
Specifically, we consider $J=\{\EE\}$ and the values of the physical parameters reported in \cref{tab:modelparams}, with typical physiological values according to
\cite{causemann2022human,holter2017interstitial,tithof2022network}.

\begin{table}
    \centering
    \begin{tabular}{ccl}
        parameter & phys.~values & description\\
        \hline
        $\rho_\PP,\rho_\FF$ &   $\SI{1000}{\kilo\gram\per\cubic\meter}$ & density of the solid tissue and of the CSF\\
        $\mu_\PP$ & $\SI{216}{\pascal}$ & first Lam\'e parameter of the solid \\
        $\lambda$ & $\SI{11567}{\pascal}$ & second Lam\'e parameter of the solid \\
        $\mu_\EE, \mu_\FF$ & $\SI{3.5e-3}{\pascal\second}$ & viscosity of the fluid in compartment $\EE$ and of CSF \\
        $\alpha_\EE$ & $0.49$ & Biot-Willis coefficient of compartment $\EE$ \\
        $c_\EE$ & $\SI{1e-6}{\square\meter\per\newton}$ & storage coefficient of compartment $\EE$ \\
        $\widetilde{k}_\EE$ & $\SI{1e-16}{\square\meter}$ & $k_\EE=\widetilde{k}_\EE I$ permeability tensor for compartment $\EE$ \\
        $\beta_{\EE}^\text{e}$ & $\SI{0}{\square\meter\per\newton\per\second}$ & external coupling coefficient for compartment $\EE\in J$\\
        $\gamma$ & \SI{1}{} & non-dimensional slip rate coefficient at interface $\Sigma$
    \end{tabular}
    \caption{Parameters of model \eqref{eq:NSMPE} with corresponding physiological values \cite{causemann2022human,holter2017interstitial,tithof2022network}.}
    \label{tab:modelparams}
\end{table}

The only non-zero distributed source/sink term is $g_\EE$, 
whereas $\vec{f}_\PP, \vec{f}_\FF$ are set to zero.
This non-zero function is homogenous in space, and its dependence on time is $g_\EE(t)=0.2\pi\sin(2\pi t)$, corresponding to an overall inflow $Q_\text{in}(t)=|\Omega_\PP|g_\EE(t)$ 
which is in the range of the CSF generation rate in physiological conditions: cfr.~\cite{causemann2022human,baledent2001cerebrospinal}.

The idealized geometry displayed in \cref{fig:stokesdp}, left, {although not fully capturing the complex geometry of the actual brain and ventricle system, }retains the same topology of the brain and CSF system:
the porous tissue is contained in $\Omega_\PP$, while the CSF can flow in the ventricle system $\Omega_\FF$, including a duct connecting it to the spinal canal at $\Gamma_\text{out}$.
The volumes of the poroelastic and fluid domains at rest are $|\Omega_\PP|=\SI{9.68}{\milli\liter}$ and $|\Omega_\FF|=\SI{0.89}{\milli\liter}$, respectively.
{and the cylindrical duct has a diameter of $\SI{1}{\centi\meter}$, which is in the physiological range 0.94-$\SI{1.72}{\centi\meter}$ of the spinal canal \cite{ulbrich2014normative}.}
The corresponding mesh is made of
$N=\SI{16834}{}$
elements with an average size
$h=\SI{1.6}{\milli\meter}$.
In terms of boundary conditions, on the outflow $\Gamma_\text{out}$ we prescribe the CSF pressure $\overline{p}^\text{out}=0$ and on the outer wall $\Gamma_\bdout$ we impose no traction on the tissue and no flow of the extracellular CSF.
To ensure that the periodic regime is attained, we simulate 3 periods of the source term $g_\EE$, of duration $T=\SI{1}{\second}$, using a time step $\Delta t=\SI{1e-3}{\second}$.
In the following, all results refer to the third period, with time $t=0$ set at its beginning.

\begin{figure}
\centering
\begin{minipage}[c]{0.25\textwidth}
\includegraphics[width=\textwidth]{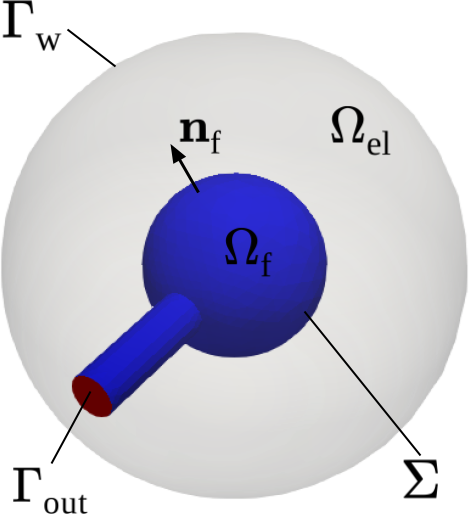}
\end{minipage}
\begin{minipage}[c]{0.74\textwidth}
\includegraphics[width=\textwidth]{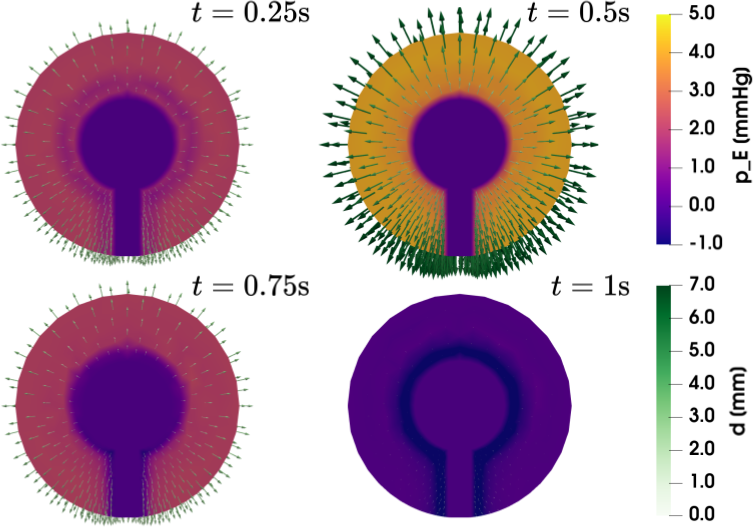}
\end{minipage}
\caption{Test case of \cref{sec:stokes}. Left: computational domain and boundaries. Right: longitudinal clip with computed displacement $\vec{d}_h$ and interstitial pressures $p_{\EE,h}$, for different time snapshots.}
\label{fig:stokesdp}
\end{figure}

\begin{figure}
\centering
\includegraphics[width=0.9\textwidth]{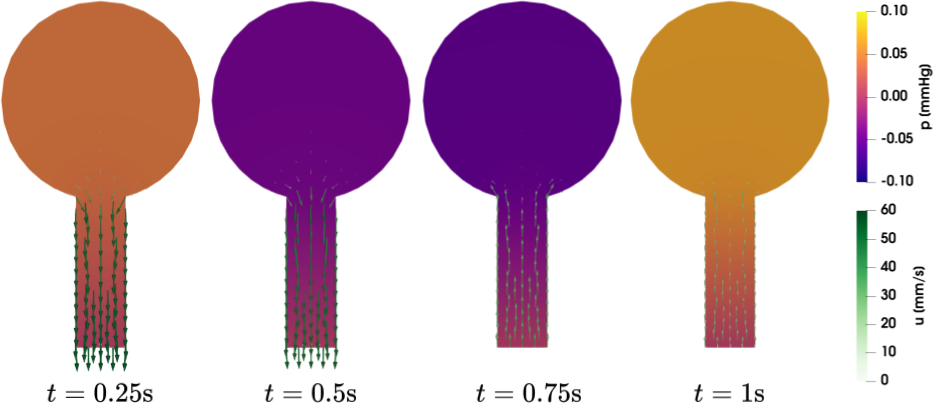}
\caption{Test case of \cref{sec:stokes}: longitudinal clip.
Computed velocity $\vec{u}_h$ and pressures $p_{h}$ in the fluid domain $\Omega_\FF$, for different time snapshots.}
\label{fig:stokesup}
\end{figure}

The displacement $\vec{d}_h$ and the interstitial CSF pressure $p_{\EE,h}$ are reported in \cref{fig:stokesdp}, for selected time snapshots.
The magnitude of $\vec{d}_h$, reaching its peak at $t=\SI{0.5}{\second}$, never exceeds $\SI{6.6}{\milli\meter}$, 
thus it justifies the choice of a linear elastic model for the cerebral tissue, in these settings.
Regarding $p_{\EE,h}$, at the same peak time $t=\SI{0.5}{\second}$ we observe a maximum difference of $\SI{4.4}{\mmHg}$ between the outer boundary $\Gamma_\text{out}$ and the interface $\Sigma$, which is comparable with analogous simulations in brain geometries \cite{causemann2022human,fumagalli2024polytopal}.
We also observe that a slight phase displacement occurs in the late part of the period between $p_{\EE,h}$ and the data $g_\EE$, with a minimum of
$\SI{-1.9}{\mmHg}$ attained at $t=\SI{0.94}{\second}$: this is due to the inertial properties of the system, which are going to be discussed in \cref{sec:navier}.

Regarding the CSF velocity $\vec{u}_h$ and pressure $p_h$ in the Stokes domain $\Omega_\FF$, the results are reported in \cref{fig:stokesup}.
The pressure gradient is significantly smaller than in the poroelastic medium and the velocity magnitude is in the physiological range reported in clinical measurements and computational assessments in the cerebral aqueduct 
\cite{causemann2022human,baledent2001cerebrospinal}.
The positive and increasing pressure gradient observed in the final portion of the period (see~\cref{fig:stokesup}, $t=\SI{1}{\second}$) is due to the aforementioned inertial effects, even more noticeable in the flowrate plots of \cref{fig:flowrates}, where the output flowrate $Q_\text{out}=\int_{\Gamma_\text{out}}\vec{u}_h\cdot\vec{n}\,d\Gamma$ shows 
{a phase delay of 0.11-$\SI{0.12}{\second}$}
w.r.t.~the inflow $Q_\text{in}=\int_{\Omega_\EE}g_\EE\,d\Omega=|\Omega_\EE|g_\EE$ associated to the source term
{, while the period is equal to $\SI{1}{\second}$ for both flowrates.
This final observation holds true in all the cases compared in \cref{sec:navier}.}

{To conclude this section, we point out how the results presented here can vary if a fully detailed brain geometry were considered.
Introducing the multi-chamber ventricle geometry would break the overall symmetry of the domain.
In that case, relatively concentrated pressure gradients would appear in the small ducts connecting the chambers, and the partition of the ventricle interface into multiple regions (instead of the two considered here) could shed light on the heterogeneous distribution of fluid mass exchange between the tissue and the ventricles.
Regarding the pial surface, instead, the computational modeling of the complex gyri and sulci structure and its mechanical interaction with the CSF in the subarachnoid space is still an open issue in the literature, due to the high computational demand involved in this task \cite{sweetman2011three,causemann2022human}.
}

\begin{figure}
    \centering
    \captionsetup[subfigure]{labelformat=empty}
    \setlength{\mywidth}{\textwidth}
    \hspace{-2em}
    \begin{subfigure}{.36\textwidth}
    \begin{tikzpicture}
        \begin{axis}[width=\textwidth,%
        height=0.5\mywidth,%
        xmin=0, xmax=1,
        xlabel={$t[\SI{}{\second}]$},%
        title={$Q_\text{out}\ [\SI{}{\milli\liter\per\second}]$},%
        axis line style = thin,%
        ymajorgrids=true,
        grid style={line width=.1pt, draw=gray!20},
        font=\scriptsize,%
        legend columns=1,%
        legend style={%
        anchor=south west, at = {(0.05,0.05)}}],
            \addplot[color=red,mark=none,
            dashed,
            x filter/.code={\pgfmathparse{\pgfmathresult-2}\pgfmathresult},
            y filter/.code={\pgfmathparse{-\pgfmathresult*1000000}\pgfmathresult}
            ]
            table [x="Time", y="uS:2", col sep=comma]{stokesNoBJS-long_integrOutlet.csv};
            \addlegendentry{Stokes, $\gamma=0$};
            \addplot[color=blue,mark=none,
            dashed,
            x filter/.code={\pgfmathparse{\pgfmathresult-2}\pgfmathresult},
            y filter/.code={\pgfmathparse{-\pgfmathresult*1000000}\pgfmathresult}
            ]
            table [x="Time", y="uS:2", col sep=comma]{stokesBJS-long_integrOutlet.csv};
            \addlegendentry{Stokes, $\gamma=1$};
            \addplot[color=red,mark=none,
            x filter/.code={\pgfmathparse{\pgfmathresult-2}\pgfmathresult},
            y filter/.code={\pgfmathparse{-\pgfmathresult*1000000}\pgfmathresult}
            ]
            table [x="Time", y="uS:2", col sep=comma]{NSnoBJS-long_integrOutlet.csv};
            \addlegendentry{N-S, $\gamma=0$};
            \addplot[color=blue,mark=none,
            x filter/.code={\pgfmathparse{\pgfmathresult-2}\pgfmathresult},
            y filter/.code={\pgfmathparse{-\pgfmathresult*1000000}\pgfmathresult}
            ]
            table [x="Time", y="uS:2", col sep=comma]{NSBJSbwStab-long_integrOutlet.csv};
            \addlegendentry{N-S, $\gamma=1$};
\addplot+[domain=0:1,samples=201, trig format=rad, color=black, mark=none, dotted] {9.6845*0.2*pi*sin(2*pi*\x)};
\addlegendentry{$Q_\text{in}$};
    \end{axis}
    \end{tikzpicture}
    \end{subfigure}
    \hspace{-2em}
    \begin{subfigure}{.36\textwidth}
    \begin{tikzpicture}
        \begin{axis}[width=\textwidth,%
        height=0.5\mywidth,%
        xmin=0, xmax=1,
        xlabel={$t[\SI{}{\second}]$},%
        title={$Q_\Sigma\ [\SI{}{\milli\liter\per\second}]$},%
        axis line style = thin,%
        ymajorgrids=true,
        grid style={line width=.1pt, draw=gray!20},
        font=\scriptsize,%
        legend columns=1,%
                legend style={%
        anchor=south west, at = {(0.05,0.05)}}]
    \addplot+[color=red,mark=none,
            dashed,
            x filter/.code={\pgfmathparse{\pgfmathresult-2}\pgfmathresult},
            y filter/.code={\pgfmathparse{-\pgfmathresult*1000000}\pgfmathresult}
            ]
            table [x="Time", y="interfU", col sep=comma]
            {stokesNoBJS-long_integrInterf.csv};
            \addlegendentry{Stokes, $\gamma=0$};
            \addplot[color=blue,mark=none,
            dashed,
            x filter/.code={\pgfmathparse{\pgfmathresult-2}\pgfmathresult},
            y filter/.code={\pgfmathparse{-\pgfmathresult*1000000}\pgfmathresult}
            ]
            table [x="Time", y="interfU", col sep=comma]
            {stokesBJS-long_integrInterf.csv};
            \addlegendentry{Stokes, $\gamma=1$};
            \addplot[color=red,mark=none,
            x filter/.code={\pgfmathparse{\pgfmathresult-2}\pgfmathresult},
            y filter/.code={\pgfmathparse{-\pgfmathresult*1000000}\pgfmathresult}
            ]
            table [x="Time", y="interfU", col sep=comma]
            {NSnoBJS-long_integrInterf.csv};
            \addlegendentry{N-S, $\gamma=0$};
            \addplot[color=blue,mark=none,
            x filter/.code={\pgfmathparse{\pgfmathresult-2}\pgfmathresult},
            y filter/.code={\pgfmathparse{-\pgfmathresult*1000000}\pgfmathresult}
            ]
            table [x="Time", y="interfU", col sep=comma]
            {NSBJSbwStab-long_integrInterf.csv};
            \addlegendentry{N-S, $\gamma=1$};
\addplot[domain=0:1,samples=201, trig format=rad, color=black, mark=none, dotted]{9.6845*0.2*pi*sin(2*pi*x)};
\addlegendentry{$Q_\text{in}$};
        \end{axis}
    \end{tikzpicture}
    \end{subfigure}
    \hspace{-2em}
    \begin{subfigure}{.36\textwidth}
    \begin{tikzpicture}
        \begin{axis}[width=\textwidth,%
        height=0.5\mywidth,%
        xmin=0, xmax=1,
        xlabel={$t[\SI{}{\second}]$},%
        title={$Q_{\Sigma_\text{can}}\ [\SI{}{\milli\liter\per\second}]$},%
        axis line style = thin,%
        ymajorgrids=true,
        grid style={line width=.1pt, draw=gray!20},
        font=\scriptsize,%
        legend columns=1,%
                legend style={%
        anchor=south west, at = {(0.05,0.05)}}]
    \addplot+[color=red,mark=none,
            dashed,
            x filter/.code={\pgfmathparse{\pgfmathresult-2}\pgfmathresult},
            y filter/.code={\pgfmathparse{-\pgfmathresult*1000000}\pgfmathresult}
            ]
            table [x="Time", y="interfU", col sep=comma]
            {stokesNoBJS-long_integrActualCan.csv};
            \addlegendentry{Stokes, $\gamma=0$};
            \addplot[color=blue,mark=none,
            dashed,
            x filter/.code={\pgfmathparse{\pgfmathresult-2}\pgfmathresult},
            y filter/.code={\pgfmathparse{-\pgfmathresult*1000000}\pgfmathresult}
            ]
            table [x="Time", y="interfU", col sep=comma]
            {stokesBJS-long_integrActualCan.csv};
            \addlegendentry{Stokes, $\gamma=1$};
            \addplot[color=red,mark=none,
            x filter/.code={\pgfmathparse{\pgfmathresult-2}\pgfmathresult},
            y filter/.code={\pgfmathparse{-\pgfmathresult*1000000}\pgfmathresult}
            ]
            table [x="Time", y="interfU", col sep=comma]
            {NSnoBJS-long_integrActualCan.csv};
            \addlegendentry{N-S, $\gamma=0$};
            \addplot[color=blue,mark=none,
            x filter/.code={\pgfmathparse{\pgfmathresult-2}\pgfmathresult},
            y filter/.code={\pgfmathparse{-\pgfmathresult*1000000}\pgfmathresult}
            ]
            table [x="Time", y="interfU", col sep=comma]
            {NSBJSbwStab-long_integrActualCan.csv};
            \addlegendentry{N-S, $\gamma=1$};
        \end{axis}
    \end{tikzpicture}
    \end{subfigure}
\caption{Test cases of \cref{sec:stokes,sec:navier}.
    Computed flowrates over a period, for different modeling choices (N-S: Navier-Stokes). Left: outlet flowrate $Q_\text{out}=\int_{\Gamma_\text{out}}\vec{u}_h\cdot\vec{n}$; {center}: interface flowrate $Q_\Sigma=\int_{\Sigma}\vec{u}_h\cdot\vec{n}_\PP$
    {; right: \emph{canal} interface flowrate $Q_{\Sigma_\text {can}} = \int_{\Sigma_\text{can}}\vec{u}_h\cdot\vec{n}_\PP$}.
    The dashed line corresponds to the distributed CSF inflow flowrate $Q_\text{in}=\int_{\Omega_\PP}g_\EE$, for reference.
    }
    \label{fig:flowrates}
\end{figure}

\section{Application to brain poro-mechanics: effects of BJS conditions and advection term}\label{sec:navier}

This section aims to assess the effects of the modeling choices for the advection term in the fluid part of system \eqref{eq:NSMPE} -- namely either Stokes or Navier-Stokes equations -- and of the BJS condition \eqref{eq:BCtgstress} at the interface -- namely either $\gamma=0$ or $\gamma=1$.
Since a semi-implicit treatment of the advection term is employed, as reported in \eqref{eq:linearization}, the problem to be solved at each time step is linear also in the Navier-Stokes case.
Moreover, backflow stabilization is needed in the Navier-Stokes case for the latest portion of the period:
the term $\int_{\Gamma_\text{out}}\frac{\rho_\FF}{2}\min\left\{0,\vec{u}_h^n\cdot\vec{n}\right\}\vec{u}^{n+1}_h\cdot\vec{v}_h$ is added to the left-hand side of the fully discrete problem \eqref{eq:fullydiscr} \cite{backflowstab}.

\subsection{Effects of fluid inertia: Stokes vs.~Navier-Stokes}

Observing the outlet flowrate $Q_\text{out}$ and the interface flowrate $Q_\Sigma=\int_\Sigma\vec{u}_h\cdot\vec{n}_\PP$ displayed in \cref{fig:flowrates}, we can notice that the differences are negligible between the Stokes and Navier-Stokes case (for the same value of $\gamma$).
Indeed, the outlet Reynolds number $\text{Re}=\frac{\rho_\FF Q_\text{out}}{\mu_\FF \pi D}$, $D$ being the diameter of the circular outlet section $\Gamma_\text{out}$, is in the range $(40,50)$ for all cases.

{However, some differences can be observed when focusing on the cylindrical duct.
Indeed, denoting by $\Sigma_\text{can}$ the lateral surface of this canal, the flowrate $Q_{\Sigma_\text{can}}=\int_{\Sigma_\text{can}}\vec{u}_h\cdot\vec{n}_\PP$ reported in \cref{fig:flowrates} displays significant amplitude differences and a phase difference (either positive or negative) of $\SI{0.06}{\second}$ between the Stokes and Navier-Stokes cases.}
{The same} phase difference between the Stokes and Navier-Stokes model 
{can also be appreciated}
in the interface-averaged pressure $P_\Sigma=\frac{1}{|\Sigma|}\int_\Sigma p_h$ reported in \cref{fig:pressures}, particularly in the case $\gamma=1$ corresponding to BJS conditions: this difference can be ascribed to the additional inertia accounted for by the advection term in Navier-Stokes equations.
{The very same observation also holds if $P_{\Sigma_\text{can}}=\frac{1}{|\Sigma_\text{can}|}\int_{\Sigma_\text{can}} p_h$ is considered instead of $P_\Sigma$.}

Overall, the results discussed here quantitatively show that the choice of the simpler Stokes model is 
enough to represent the {general features of the }flow in the regime of interest. 
{The differences observed when focusing on the canal interface $\Sigma_\text{can}$ do not have a significant effect on the rest of the fluid domain, and these differences would be even less pronounced in the aqueduct of Sylvius of an actual brain geometry, which is characterized by an even smaller diameter ($\sim1$-$\SI{3}{\milli\meter}$) and thus less inertial effects.}

\begin{figure}
    \centering
    \setlength{\mywidth}{\textwidth}
    \begin{tikzpicture}
        \begin{axis}[width=0.5\textwidth,%
        height=0.5\mywidth,%
        xmin=0, xmax=1,
        xlabel={$t[\SI{}{\second}]$},%
        ylabel={$P_\Sigma\ [\SI{}{\mmHg}]$},%
        axis line style = thin,%
        ymajorgrids=true,
        grid style={line width=.1pt, draw=gray!20},
        font=\scriptsize,%
        legend columns=1,%
        legend style={%
        anchor=south west, at = {(0.05,0.04)}}],
            \addplot[color=red,mark=none,
            dashed,
            x filter/.code={\pgfmathparse{\pgfmathresult-2}\pgfmathresult},
            ]
            table [x="Time", y="pSigma", col sep=comma]{stokesNoBJS-long_pSigma.csv};
            \addlegendentry{Stokes, $\gamma=0$};
            \addplot[color=blue,mark=none,
            dashed,
            x filter/.code={\pgfmathparse{\pgfmathresult-2}\pgfmathresult},
            ]
            table [x="Time", y="pSigma", col sep=comma]{stokesBJS-long_pSigma.csv};
            \addlegendentry{Stokes, $\gamma=1$};
            \addplot[color=red,mark=none,
            x filter/.code={\pgfmathparse{\pgfmathresult-2}\pgfmathresult},
            ]
            table [x="Time", y="pSigma", col sep=comma]{NSnoBJS-long_pSigma.csv};
            \addlegendentry{N-S, $\gamma=0$};
            \addplot[color=blue,mark=none,
            x filter/.code={\pgfmathparse{\pgfmathresult-2}\pgfmathresult},
            ]
            table [x="Time", y="pSigma", col sep=comma]{NSBJSbwStab-long_pSigma.csv};
            \addlegendentry{N-S, $\gamma=1$};
    \end{axis}
    \end{tikzpicture}
\caption{Test cases of \cref{sec:stokes,sec:navier}.
    Interface-average fluid pressure $P_\Sigma=\frac{1}{|\Sigma|}\int_\Sigma p_h$ over a period for different modeling choices (N-S: Navier-Stokes).}
    \label{fig:pressures}
\end{figure}

\begin{figure}
\centering
\includegraphics[width=\textwidth]{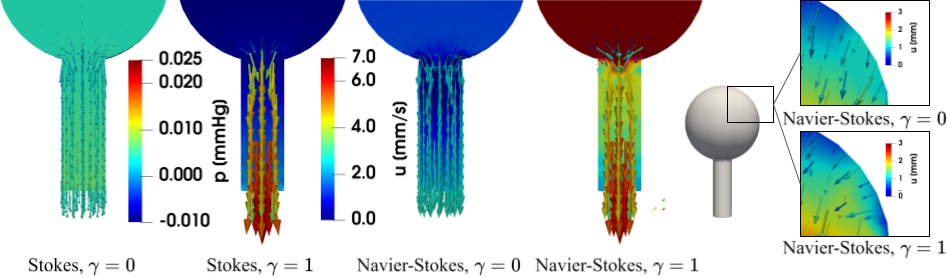}
\caption{Test cases of \cref{sec:stokes,sec:navier}: longitudinal clip.
Main panels: computed velocity $\vec{u}_h$ and pressures $p_{h}$ in the fluid domain $\Omega_\FF$ at $t=\SI{0.24}{\second}$, for different modeling choices.
Right: zoom on $\vec{u}_h$ distribution in the boxed region.}
\label{fig:stokesupALLt0p24}
\end{figure}

\subsection{Effects of BJS conditions}

To assess the effects of the BJS condition \eqref{eq:BCtgstress}, we can compare the cases $\gamma=0$ (no tangential stress at the pores) to the cases $\gamma=1$ (BJS condition) in \cref{fig:flowrates,fig:pressures}.
The approximately sinusoidal evolution of flowrates and pressure exhibits a smaller amplitude in the case $\gamma=0$, particularly significant in the interface quantities $P_\Sigma$ and $Q_\Sigma${ and even more in the \emph{canal} interface flowrate $Q_{\Sigma_\text{can}}$}.
An interpretation of this effect is that the friction in the tangential direction introduced by the BJS interface condition makes CSF velocity orient in the direction orthogonal to the interface, thus increasing the amount of CSF flowing through it at each given time.
Due to the incompressibility of the fluid model, this increased flow corresponds to an increased pressure difference $P_\Sigma-\overline{p}_\text{out} = P_\Sigma$.
This is confirmed by the results reported in \cref{fig:stokesupALLt0p24}: when the BJS condition is applied, $\vec{u}_h$ is mostly normal to the interface in the upper region of $\Omega_\FF$ (see the boxed panels in \cref{fig:stokesupALLt0p24}), and an increased flowrate is observable in the vertical duct. 
Moreover, the friction introduced by the BJS condition yields a bell-shaped profile in the {canal}, in contrast with the flat profile of the case $\gamma=0$.
Such a profile better represents the actual CSF flow in the cerebral aqueduct {and spinal canal} \cite{sweetman2011three}.
To conclude this discussion, it is interesting to notice that, despite the significant differences in the pressure and velocity distribution, the outlet flowrate $Q_\text{out}$ shows little discrepancies between the different cases, since the outflow is mostly driven by conservation of mass.

\section{Conclusions}

Motivated by the modeling of cerebrospinal fluid (CSF) flow in the brain, in this work we studied a coupled multi-domain system encompassing multiple-network poroelasticity and (Navier-)Stokes equations, with a particular focus on the interface conditions between the two physical domains.
The polytopal discontinuous Galerkin method for space discretization introduced in \cite{fumagalli2024polytopal} was extended to consider the nonlinear advection term of Navier-Stokes equations in the fluid domain and Beavers-Joseph-Saffman (BJS) interface conditions: an a priori analysis of the resulting method in the Stokes case proved it to be stable and optimally convergent.
The method was numerically verified by means of convergence tests in three dimensions{, and a quantitative comparison with a standard DG approach on a hexahedral mesh showed the advantages of the polyhedral approach.}.
Considering an idealized geometry, representative of the main topological characteristics of the brain tissue and ventricular system, the method was employed to represent porous tissue dynamics and CSF flow in physiological settings, obtaining results in partial consistency with the literature.
Analyzing the effects of the advection term in the fluid model, it was quantitatively observed that neglecting the advection term in the Navier-Stokes equations does not have a significant impact on the CSF velocity distribution (at least in physiological conditions), though it may yield a slightly inaccurate prediction of the pressure distribution.
Finally, it was highlighted how the BJS condition strongly affects the CSF pressure and velocity distribution: the results obtained with the enforcement of this condition better represent the CSF flow in the brain, especially in terms of its profile in the {canal}.

To further develop the computational model proposed in this work, {a crucial improvement would be to consider patient-specific geometries, including a detailed representation of the ventricle chambers, the interventricular foramina and aqueduct of Sylvius, and the folds of the brain cortex.
This would allow to obtain physiologically accurate results, which are necessary for a comparison with clinical measurements, thus opening the way to data-driven and possibly personalized calibration of the model parameters.}
{To build up a computational domain representing the complex brain geometry}, image segmentation pipelines developed in the literature 
(see, e.g., \cite{mardal2022mathematical,fumagalli2020image})
should be combined with mesh generation and agglomeration algorithms for the construction of a polyhedral mesh 
\cite{dassi2022virtual,feder2024r3mg,antonietti2024agglomeration}.

{Such a complex model would require further developments also in the computational solver, to make the numerical simulations affordable.
The quantitative comparison between the PolyDG method and a standard DG method on a hexahedral mesh reported here
provided a preliminary -- yet indicative -- assessment of the advantages of the polyhedral approach in handling fine geometric details, 
in terms of accuracy, memory requirements, and efficiency in the solution of the problem.
Yet, to actually translate the method to a realistic geometry, some directions of further development must be followed.
First, the assembly time of the PolyDG discretization, currently comparable with the standard DG one, could be further significantly improved by a quadrature-free strategy \cite{AntoniettiHoustonPennesi_18,lymph}.
Then, a parallel solver could be attained with
a suitable integration of the above-mentioned agglomeration algorithms, either as a pre-processing stage or by having each parallel process perform agglomeration on its own sub-mesh \cite{feder2024r3mg,pseudostress}.
Finally, scalable preconditioners for multi-physics problems could be introduced, hinging upon either operator-based approaches or inexact factorization \cite{mardal2004uniform,boon2021robust, both2022iterative, deparis2014parallel,ferronato2019general}.
}

{In terms of model personalization}, the multiple-network modeling of tissue perfusion could be exploited to assimilate clinical measurements of the blood flow in the cerebral vasculature directly into the model, without the need for a pre-processing phase to estimate CSF generation from such data, or to combine the system with more detailed computational models of the heart pulsation
\cite{quarteroni2017cardiovascular,hirschhorn2020fluid,bucelli2023mathematical}.
Finally, an extension to more complex hyperelastic rheologies of the brain tissue should be envisaged, to more accurately represent its mechanical response
\cite{budday2017rheological}.

\section*{Acknowledgments}
The author has been supported by \emph{ICSC--Centro Nazionale di Ricerca in High Performance Computing, Big Data, and Quantum Computing} funded by the European Union--NextGenerationEU.
The present research is part of the activities of \lq\lq Dipartimento di Eccellenza 2023-2027'', Dipartimento di Matematica, Politecnico di Milano.
The author is member of GNCS-INdAM and acknowledges the support of the GNCS project CUP E53C23001670001.

\appendix

\section{Bilinear forms and functionals of the continuous and semi-discrete problem}\label{sec:forms}
{
The forms appearing in the continuous weak problem \cref{eq:weak} are defined as follows:
\begin{equation}\label{eq:formsContPb}\begin{aligned}
a_\PP:\vec{W}\times\vec{W}\to\mathbb R,
&\quad a_\PP(\vec{d},\vec{w}) = (\sigma_\PP(\vec{d}),\varepsilon(\vec{w}))_{\Omega_\PP},\\
a_\jj:Q_J\times Q_J\to\mathbb R,
&\quad a_\jj(p_\jj,q_\jj) = \left(\frac{1}{\mu_\jj}k_\jj\nabla p_\jj,\nabla q_\jj\right)_{\Omega_\PP} ,\\
C_\jj:[Q_J]^{N_J}\times Q_J\to\mathbb R,
&\quad
C_\jj(\{p_\kk\}_{\text{\tiny$\kk\in J$}},q_\jj) = 
\sum_{\kk\in J}(\beta_{\kk\jj}(p_\jj-p_\kk), q_\jj)_{\Omega_\PP}
+ (\beta_{\jj}^\text{e}p_\jj, q_\jj)_{\Omega_\PP} ,\\
\\
a_\FF:\vec{V}\times\vec{V}\to\mathbb R,
&\quad a_\FF(\vec{u},\vec{v}) = (\tau_\FF(\vec{u}),\varepsilon(\vec{v}))_{\Omega_\FF},\\
N_\FF:\vec{V}\times\vec{V}\times\vec{V}\to\mathbb R,
&\quad N_\FF(\vec{u}',\vec{u},\vec{v}) = \left(\rho_\FF(\vec{u}'\cdot\nabla)\vec{u}+\frac{\rho_\FF}{2}(\nabla\cdot\vec{u}')\vec{u},\vec{v}\right)_{\Omega_\FF},\\
b_\jj:Q_J\times\vec{W}\to\mathbb R,
&\quad b_\jj(q_\jj,\vec{w}) = -(\alpha_\jj q_\jj,\Div\vec{w})_{\Omega_\PP} ,\\
b_\FF:Q\times\vec{V}\to\mathbb R,
&\quad b_\FF(q,\vec{v}) = -(q,\Div\vec{v})_{\Omega_\FF},\\
F_\PP:\vec{W}\to\mathbb R,
&\quad F_\PP(\vec{w}) = (\vec{f}_\PP,\vec{w})_{\Omega_\PP},\\
F_\jj:Q_J\to\mathbb R,
&\quad F_\jj(q_\jj) = (g_\jj,q_\jj)_{\Omega_\PP} ,\\
F_\FF:\vec{V}\to\mathbb R,
&\quad F_\FF(\vec{v}) = (\vec{f}_\FF,\vec{v})_{\Omega_\FF},\\
\mathfrak J:Q_J\times\vec{W}\times\vec{V}\to\mathbb R,
&\quad \mathfrak J(p_\EE,\vec{w},\vec{v}) = \int_\Sigma p_\EE\left(\vec{w}\cdot\vec{n}_\PP+\vec{v}\cdot\vec{n}_\FF\right)d\Sigma,\\
\mathfrak G:(\vec{V}\oplus\vec{W})\times(\vec{V}\oplus\vec{W})&\to\mathbb R,
\quad \mathfrak G(\vec{z}_1, \vec{z}_2) = \int_\Sigma \frac{\gamma\mu_\FF}{\sqrt{k_\EE}}\left(\vec{z}_1\right)_\vec{\tau}\cdot\left(\vec{z}_2\right)_\vec{\tau}d\Sigma.
\end{aligned}\end{equation}
}

The forms building up the semidiscrete problem \eqref{eq:DG} are defined as follows:
\begin{subequations}\label{eq:formsAllTogether}\begin{align}
    \mathcal L_\PP(\vec{d},\{p_\kk\}_{\kk\in J}; \vec{w}) &= \mathcal A_\PP(\vec{d},\vec{w}) + \sum_{\kk\in J}\mathcal B_\kk(p_\kk,\vec{w}) 
    , \\
    \mathcal L_\jj(\{p_\kk\}_{\kk\in J},\partial_t\vec{d};q_\jj) &=  \mathcal A_\jj(p_\jj,q_\jj) + \mathcal C_\jj(\{p_\kk\}_{\kk\in J},q_\jj) - \mathcal B_\jj(q_\jj,\partial_t \vec{d}) 
    , \qquad \forall\jj\in J, \\
    \mathcal L_\FF(\vec{u},p;\vec{v},q) &= \mathcal{N}_\FF(\vec{u},\vec{u},\vec{v}) + \mathcal A_\FF(\vec{u},\vec{v}) + \mathcal B_\FF(p,\vec{v}) 
    - \mathcal B_\FF(q,\vec{u}) + \mathcal S(p,q),\label{eq:formsAllTogetherFF}
\end{align}\end{subequations}
where
\begin{subequations}\label{eq:formsSeparated1}\begin{align}
    \begin{split}
    \mathcal A_\PP(\vec{d},\vec{w}) &= \int_{\Omega_\PP}\sigma_\PP(\vec{d})\colon\varepsilon_h(\vec{w})
    \\&\qquad
    - \sum_{F\in\mathscr F_\PP^\II\cup\mathscr F_\PP^\DD}
    \int_F\left(\average{\sigma_\PP(\vec{d})}\colon\jump{\vec{w}}+\jump{\vec{d}}\colon\average{\sigma_\PP(\vec{w})}
    -
    \eta\jump{\vec{d}}\colon\jump{\vec{w}}\right),
    \end{split}\\
    \mathcal F_\PP(\vec{w}) &= \int_{\Omega_\PP}\vec{f}_\PP\cdot\vec{w}
    ,\\
    \mathcal B_\jj(p_\jj,\vec{w}) &= -\int_{\Omega_\PP}\alpha_\jj p_\jj\,\Div_h\,\vec{w}
    + \sum_{F\in\mathscr F_\PP^\II\cup\mathscr F_\PP^{\DD_\jj}}
    \int_F\alpha_\jj \average{p_\jj I}\colon\jump{\vec{w}}, 
    ,\\
    \begin{split}\mathcal A_\jj(p_\jj,q_\jj) &= \int_{\Omega_\PP}\mu_\jj^{-1}k_\jj\nabla_h p_\jj\cdot\nabla_h q_\jj
    \\&\hspace{-1em}- \sum_{F\in\mathscr F_\PP^\II\cup\mathscr F_\PP^{\DD_\jj}}
    \int_F\left(\average{\mu_\jj^{-1}k_\jj\nabla_h p_\jj }\cdot\jump{q_\jj } +\jump{p_\jj }\cdot\average{\mu_\jj^{-1}k_\jj\nabla_h q_\jj }
    -
    \zeta_\jj\jump{p_\jj}\cdot\jump{q_\jj}
    \right)
    ,\end{split}\\
    \mathcal C_\jj(\{p_\kk\}_{\kk\in J},q_\jj) &= \int_{\Omega_\PP}\sum_{\kk\in J}\beta_{\kk\jj}(p_\jj-p_\kk)q_\jj + \int_{\Omega_\PP}\beta_\jj^\text{e}p_\jj q_\jj
    ,\\
    \mathcal F_\jj(q_\jj) &= \int_{\Omega_\PP}g_\jj q_\jj 
    ,
\end{align}\end{subequations}
\begin{subequations}\label{eq:formsSeparated2}\begin{align}
    \begin{split}\mathcal A_\FF(\vec{u},\vec{v}) &= \int_{\Omega_\FF}\tau_\FF(\vec{u})\colon\varepsilon_h(\vec{v})
    \\&- \sum_{F\in\mathscr F_\FF^\II\cup\mathscr F_\FF^\DD}
    \int_F\left(\average{\tau_\FF(\vec{u})}\colon\jump{\vec{v}}+\jump{\vec{u}}\colon\average{\tau_\FF(\vec{v})}
    -
    \gamma_\vec{v}\jump{\vec{u}}\colon\jump{\vec{v}}
    \right)
    ,\end{split}\\
    \begin{split}\mathcal N_\FF(\vec{u}',\vec{u},\vec{v}) &= \int_{\Omega_\FF}\left(\rho_\FF(\vec{u}'\cdot\nabla_h)\vec{u}\cdot\vec{v} + \frac{\rho_\FF}{2}(\nabla_h\cdot\vec{u}')\vec{u}\cdot\vec{v}\right)
    \\&- \sum_{F\in\mathscr F_\FF^\II}
    \int_F\left(\rho_\FF\jump{\vec{u}}\colon\average{\vec{u}'}\otimes\average{\vec{v}} + \frac{\rho_\FF}{2}\jump{\vec{u}'}\colon I\average{\vec{u}\cdot\vec{v}}\right)
    ,\end{split}\\
    \mathcal B_\FF(p,\vec{v}) &= -\int_{\Omega_\FF} p\,\Div_h\,\vec{v}
    + \sum_{F\in\mathscr F_\FF^\II\cup\mathscr F_\FF^{\DD}}
    \int_F \average{p I}\colon\jump{\vec{v}}
    ,\\
    \mathcal F_\FF(\vec{v}) &= \int_{\Omega_\FF}\vec{f}_\FF\cdot\vec{v}
    ,\\
    \mathcal S(p,q) &=
    \sum_{F\in\mathscr F_\FF^\II}
    \int_F\gamma_p\jump{p}\cdot\jump{q}
    ,\\
    \mathcal J(p_\EE, \vec{w},\vec{v}) &=
        \sum_{F\in\mathscr{F}^\Sigma}\int_F\left(
    \average{p_\EE I} \colon \jump{\vec{w},\vec{v}}
    \right)
    ,\\
    \mathcal G(\vec{v}_1-\vec{w}_1,\vec{v}_2-\vec{w}_2) &=
        \sum_{F\in\mathscr{F}^\Sigma}\int_F
    \frac{\gamma\mu_\FF}{\sqrt{k_\EE}}\jump{\vec{w}_1,\vec{v}_1}_\vec{\tau} \cdot \jump{\vec{w}_2,\vec{v}_2}_\vec{\tau},
\end{align}\end{subequations}
where $\nabla_h, \varepsilon_h, \Div_h$ denote the element-wise gradient, symmetric gradient, and divergence operators, respectively, and the stress tensors $\sigma_\PP,\tau_\FF$ are implicitly defined in terms of these piecewise operators.
The parameters $\eta,\zeta_\jj,\gamma_\vec{v},\gamma_p$ appearing in these forms are defined as follows \cite{corti2022numerical,AMVZ22}:
\begin{equation}\label{eq:penaltyparams}
\eta = \overline{\eta}\frac{\overline{\mathbb C}_\PP^K}{\{h\}_\text{H}},
\qquad
\zeta_\jj = \overline{\zeta}_\jj\frac{\overline{k}_\jj^K}{\sqrt{\mu_\jj}\{h\}_\text{H}},
\qquad
\gamma_\vec{v} = \overline{\gamma}_\vec{v}\frac{\mu}{\{h\}_\text{H}},
\qquad
\gamma_p = \overline{\gamma}_p\{h\}_\text{H},
\end{equation}
where
$\{h\}_\text{H}$ denotes the harmonic average on $K^{\pm}$ (with $\{h\}_\text{H}=h_K$ on Dirichlet faces),
$\overline{\mathbb C}_\PP^K = \|\mathbb C_\PP^{1/2}|_K\|_2^2$ and $\overline{k}_\jj^K = \|k_\jj^{1/2}|_K\|_2^2$ are the $L^2$-norms of the symmetric second-order tensors appearing in the elasticity and Darcy equations, for each $K\in\mathscr T_{h,\PP}$,
and
$\overline{\eta},\overline{\zeta_\jj}\ \forall\jj\in J,\overline{\gamma}_\vec{v},\overline{\gamma}_p$ are penalty constants to be chosen large enough.
In all the numerical tests of the present work, all these penalty constants are set to 10.

\section{Stein interpolation results}\label{sec:stein}

If $\Omega$ is a Lipschitz-regular domain,
the following interpolation results hold for the Stein extension operator $\mathscr E_K: H^s(\Omega)\to H^s(\mathbb R^d)$
(cf.~\cite{fumagalli2024polytopal,perTraceInterp}):
\begin{lemma}\label{th:interp}
Under
\cref{hp},
the following estimates hold:
\[\begin{aligned}
    &\forall (\vec{w},\{q_\jj\}_{\jj\in J},\vec{v},q)\in [H^{m+1}(\mathscr T_{h,\PP})]^{d+N_J}\times[H^{m+1}(\mathscr T_{h,\FF})]^{d+1}\\
    &\exists (\vec{w}_I,\{q_{\jj I}\}_{\jj\in J},\vec{v}_I,q_I)\in\spaceW\times
    [\spaceQj]^{N_J}
    \times\spaceV\times\spaceQ \quad \text{such that}
    \\
    &i)\ \normcont{(\vec{w}-\vec{w}_I,\{q_\jj-q_{\jj I}\}_{\jj\in J},\vec{v}-\vec{v}_I,q-q_I)}^2 \\
    &\qquad\lesssim \sum_{K\in\mathscr T_{h,\PP}}h_K^{2m}\left(\|\mathcal E_K\vec{w}\|_{[H^{m+1}(\widehat{K})]^d}^2+\sum_{\jj\in J}\|\mathcal E_\jj q_\jj\|_{H^{m+1}(\widehat{K})}^2+\|\mathcal E_K\vec{d}\|_{[H^{m+1}(\widehat{K})]^d}^2+\|\mathcal E_Kp\|_{H^{m+1}(\widehat{K})}^2\right), \\
    &ii)\ \|\vec{w}\|_{\mathscr F^\Sigma}^2+\sum_{\jj\in J}\|q_\jj\|_{\mathscr F^\Sigma}^2 + \|\vec{v}\|_{\mathscr F^\Sigma}^2+\|q\|_{\mathscr F^\Sigma}^2\\
    &\qquad\lesssim \sum_{K\in\mathscr T_{h,\PP}}h_K^{2m+1}\left(\|\mathcal E_K\vec{w}\|_{[H^{m+1}(\widehat{K})]^d}^2+\sum_{\jj\in J}\|\mathcal E_\jj q_\jj\|_{H^{m+1}(\widehat{K})}^2+\|\mathcal E_K\vec{d}\|_{[H^{m+1}(\widehat{K})]^d}^2+\|\mathcal E_Kp\|_{H^{m+1}(\widehat{K})}^2\right),
\end{aligned}\]
where $\widehat{K}\supseteq K$, for each $K\in\mathscr T_h$, are shape-regular simplexes covering $\mathscr T_h$, thanks to \cref{hp}.
\end{lemma}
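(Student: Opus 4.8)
The plan is to reduce both estimates to element-wise bounds and then sum over the mesh, exploiting the shape-regular simplicial covering $\{\widehat K\}$ granted by \cref{hp}. For each $K\in\mathscr T_h$, since $\widehat K\supseteq K$ is a simplex and the Stein operator $\mathcal E_K$ extends the relevant component to an $H^{m+1}$ function on $\mathbb R^d$ with norm controlled by the local one, I would define $\vec{w}_I$ (and analogously $q_{\jj I},\vec{v}_I,q_I$) as the restriction to $K$ of an optimal polynomial approximation $\Pi^m_{\widehat K}(\mathcal E_K\vec{w})\in\mathbb P^m(\widehat K)$ on $\widehat K$. Because $\mathbb P^m$ is defined globally and $K\subseteq\widehat K$, this restriction lies in $X_h^{\mathrm{DG}}$, so the resulting tuple belongs to $\spaceW\times[\spaceQj]^{N_J}\times\spaceV\times\spaceQ$ as required. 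The classical $hp$-approximation estimates on simplices (Bramble--Hilbert, cf.~\cite{cangiani2014hp}) then give, for $0\le\ell\le m+1$, the bulk bound $\|\mathcal E_K\vec{w}-\Pi^m_{\widehat K}(\mathcal E_K\vec{w})\|_{H^\ell(\widehat K)}\lesssim h_K^{m+1-\ell}\|\mathcal E_K\vec{w}\|_{H^{m+1}(\widehat K)}$, which, restricted to $K$, controls every volumetric contribution appearing in $\normcont{\cdot}$.

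For part (i), I would expand $\normcont{\cdot}$ into its volumetric and face parts according to the definitions of $\normcontD{\cdot},\normcontJ{\cdot},\normcontU{\cdot},\normcontP{\cdot}$. The volumetric seminorms (those weighted by $\mathbb C_\PP^{1/2}\varepsilon_h$, $\mu_\jj^{-1/2}k_\jj^{1/2}\nabla_h$, $\sqrt{2\mu}\,\varepsilon_h$, and the $L^2$ pressure term) are handled directly by the $\ell=1$ and $\ell=0$ cases of the bulk bound. For the jump terms $\|\sqrt{\eta}\jump{\cdot}\|$ and the weighted-average flux terms $\|\eta^{-1/2}\average{\sigma_\PP(\cdot)}\|$ (and their analogues), I would invoke the polytopal trace inequality $\|v\|_{L^2(F)}^2\lesssim h_K^{-1}\|v\|_{L^2(K)}^2+h_K\,|v|_{H^1(K)}^2$, valid on polytopic-regular meshes (cf.~\cite{cangiani2014hp,antonietti2016review}), applied to the interpolation error and to its first derivatives. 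Inserting the penalty scalings of \eqref{eq:penaltyparams} (so that $\eta\sim h^{-1}$, $\zeta_\jj\sim h^{-1}$, $\gamma_\vec{v}\sim h^{-1}$, $\gamma_p\sim h$, whence $\eta^{-1/2}\sim h^{1/2}$, etc.) makes every weighted face contribution collapse to order $h_K^{2m}\|\mathcal E_K(\cdot)\|_{H^{m+1}(\widehat K)}^2$. Summing over all $K$ and all faces yields (i).

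For part (ii), I would apply the same trace inequality to the interpolation error on each interface face $F\in\mathscr F^\Sigma$: $\|\vec{w}-\vec{w}_I\|_{L^2(F)}^2\lesssim h_K^{-1}\|\vec{w}-\vec{w}_I\|_{L^2(K)}^2+h_K\,|\vec{w}-\vec{w}_I|_{H^1(K)}^2$. Using the $\ell=0$ and $\ell=1$ bulk bounds ($\lesssim h_K^{2(m+1)}$ and $\lesssim h_K^{2m}$, respectively, times $\|\mathcal E_K\vec{w}\|_{H^{m+1}(\widehat K)}^2$), both summands produce order $h_K^{2m+1}\|\mathcal E_K\vec{w}\|_{H^{m+1}(\widehat K)}^2$, and likewise for $q_\jj,\vec{v},q$. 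Summing over the interface faces and their adjacent poroelastic elements gives (ii). I read the left-hand side of (ii) as the trace of the interpolation error, consistently with its use in the proof of \cref{th:conv}.

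The main obstacle is geometric rather than analytic: on general polytopal elements, which may carry many arbitrarily small faces, neither the polynomial approximation estimates nor the trace inequalities hold with element-uniform constants a priori. The role of \cref{hp} is precisely to supply the $h$-uniform polytopic-regularity, the local bounded-variation property, and the shape-regular simplicial covering $\widehat K\supseteq K$ that let one transfer the simplicial Bramble--Hilbert and trace estimates to the polytopes with constants independent of $h$ (absorbed into $\lesssim$). The remaining care is pure bookkeeping: matching each penalty weight in \eqref{eq:penaltyparams} to the correct power of $h_K$ so that the face terms land at exactly order $h^{2m}$ (for (i)) and $h^{2m+1}$ (for (ii)). Since all these ingredients are already established in \cite{fumagalli2024polytopal,perTraceInterp} for the very same norms, the proof reduces to verifying that the present vector- and multi-compartment-valued setting inherits them componentwise.
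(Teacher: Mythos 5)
Your proposal is correct and follows essentially the same route as the paper, which gives no self-contained proof but defers to \cite{fumagalli2024polytopal,perTraceInterp}, where the argument is exactly the one you reconstruct: restrict an optimal polynomial approximation of the Stein extension on the covering simplices $\widehat{K}$, apply $hp$-type Bramble--Hilbert bounds for $0\le\ell\le m+1$ together with the polytopal trace inequality, and match the penalty scalings of \eqref{eq:penaltyparams} to land the face terms at order $h_K^{2m}$ for (i) and $h_K^{2m+1}$ for (ii). Your reading of the left-hand side of (ii) as the trace of the interpolation error (rather than of the functions themselves, as the statement literally says) is the correct interpretation of an evident typo, consistent with how the bound is invoked in the proof of \cref{th:conv}.
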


\section{{Discontinuous Galerkin method for the Poisson problem of \cref{sec:polyvsstd}}}\label{sec:poisson}
{
This section reports the Discontinuous Galerkin method applied to the Poisson problem
\[\begin{cases}
    -\Delta u=f &\qquad\text{in }\Omega, \\ u=g &\qquad\text{on }\partial\Omega.
\end{cases}\]
We introduce a generic mesh $\mathscr T_h$ of the domain $\Omega$, we collect the element faces in $\mathscr F$, and we employ an analogous notation to \cref{sec:polydg} for jump and average face operators.
The discrete problem reads as follows:\\
Find $u_h\in X_h^\text{DG}$ such that
\begin{equation}
    (\nabla u_h,\nabla v_h)_\Omega - \sum_{F\in\mathscr F}\left(\average{u_h}\cdot\jump{v_h} + \jump{u_h}\cdot\average{v_h} - \frac{\overline{\eta}}{\{h\}_\text{H}}\jump{u_h}\cdot\jump{v_h}\right) = (f,v_h)_\Omega,
\end{equation}
where $\overline\eta$ is a penalty coefficient and $\{h\}_\text{H}$ denotes the harmonic average of the element size across a face.
For further details, we refer the reader to \cite{antonietti2016review}.
We point out that the same formulation can be employed on a standard simplicial or hexahedral mesh, or on a generic polyhedral one.
}

\bibliographystyle{elsarticle-num} 
\bibliography{main}

\end{document}